\def\newaliasedtheorem#1[#2]#3{
  \newaliascnt{#1@alt}{#2}
  \newtheorem{#1}[#1@alt]{#3}
  \expandafter\newcommand\csname #1@altname\endcsname{#3}
}
\theoremstyle{plain}
\newtheorem{theorem}{Theorem}[section]
\newtheorem*{Con}{Conjecture}
\theoremstyle{remark}
\theoremstyle{definition}
\theoremstyle{remark}
\numberwithin{equation}{section}
\def\eps{\varepsilon}
\def\tr{\text{tr }}
\def\R{\mathbb R}
\def\C{{\mathbb C}}
\def\N{{\mathbb N}}
\def\Z{{\mathbb Z}}
\def\T{{\mathbb T}}
\DeclareMathOperator{\diver}{div}
\DeclareMathOperator{\curl}{curl}
\DeclareMathOperator{\supp}{supp}
\newcommand*{\RR}{\ensuremath{\mathcal{R}}}
\DeclareMathOperator{\Id}{Id}
\DeclareMathOperator{\dist}{d}
\title{Sharp energy regularity and typicality results for H\"older solutions of incompressible Euler equations}
\author[L. De Rosa and  R. Tione]{Luigi De Rosa \and Riccardo Tione}
\address{Luigi De Rosa 
\hfill\break  \'Ecole Polytechnique F\'ed\'erale de Lausanne, Institute of Mathematics, Station 8, CH-1015 Lausanne, Switzerland.}
\email{luigi.derosa@epfl.ch}
\address{Riccardo TIone  
\hfill\break  Institut f\"ur Mathematik, Universit\"at Z\"urich, Winterthurerstrasse 190, CH-8057 Zurich, Switzerland.}
\email{riccardo.tione@math.uzh.ch}
\begin{document}

\maketitle

\begin{abstract}
This paper is devoted to show a couple of typicality results for weak solutions $v\in C^\theta$ of the Euler equations, in the case $\theta<1/3$. It is known that convex integration schemes produce wild weak solutions that exhibit anomalous dissipation of the kinetic energy $e_v$. We show that those solutions are typical in the Baire category sense. From \cite{Is15}, it is know that the kinetic energy $e_v$ of $\theta$-H\"older continuous weak solution $v$ of the Euler equations satisfy $ e_v\in C^{\frac{2\theta}{1-\theta}}$. As a first result we prove that solutions with that behavior are a residual set in suitable complete metric space $X_\theta$, that is contained in the space of all $C^\theta$ weak solutions, whose choice is discussed at the end of the paper. More precisely we show that the set of solutions $v\in X_\theta$ with $e_v \in C^{\frac{2\theta}{1-\theta}}$ but not to $\bigcup_{p\ge 1,\eps >0}W^{\frac{2\theta}{1-\theta} + \eps,p}(I)$ for any open $I \subset [0,T]$, are a residual set in $X_\theta$. This, in particular, partially solves \cite[Conjecture 1]{IsSu}. We also show that smooth solutions form a nowhere dense set in the space of all the $C^\theta$ weak solutions. The technique is the same and what really distinguishes the two cases is that in the latter there is no need to introduce a different complete metric space with respect to the natural one.
\end{abstract}
\par
\medskip\noindent
\textbf{Keywords:} incompressible Euler equations, H\"older solutions, energy regularity, convex integration, Baire category.
\par
\medskip\noindent
{\sc MSC (2010): 35Q31 - 35D30 - 76B03 - 26A21.
\par
}
\section{Introduction}\label{sec:intro}
In the spatial periodic setting $\T^3=\R^3 / \Z^3$, we consider the incompressible Euler equations
\begin{equation}\label{E}
\left\{\begin{array}{l}
\partial_t v+  \diver(v \otimes v)+\nabla p =0\\ 
 \diver v = 0\,
\end{array}\right.\qquad \mbox{in }  \T^3\times [0,T] 
\end{equation}
where $v:  \T^3\times [0,T]   \rightarrow \R^3$ represents the velocity of an incompressible fluid, $p:\T^3\times [0,T]  \rightarrow \R$ is the hydrodynamic pressure, with the constraint $\int_{\T^3}p\, dx =0$ which guaranties its uniqueness.\\

A weak solution of the system \eqref{E} is a vector field $v\in L^2(\T^3\times [0,T];\R^3)$ such that 
$$
\int_0^T\int_{\T^3}\left( v\cdot \partial_t \varphi+ v\otimes v : \nabla \varphi \right)\,dx dt=0,
$$
for all $\varphi\in C^\infty_c(\T^3\times (0,T);\R^3)$ such that $\diver \varphi=0$. The pressure does not appear in the weak formulation because it can be recovered as the unique $0$-average solution of 
$$
-\Delta p=\diver \diver (v\otimes v).
$$
Multiplying by $v$ the first equation in \eqref{E} and integrating by parts on $\T^3$, one gets that, at least for smooth solutions, 
$$
\frac{d}{dt }e_v(t):=\frac{d}{dt}\int_{\T^3} |v|^2(x,t)\, dx=0, \qquad \forall t\in [0,T].
$$
For weak solutions $v\in L^\infty((0,T);C^\theta(\T^3))$ it is known, and was previously conjectured by Lars Onsager, that the threshold for the energy conservation is $\theta=1/3$. The first proof of the conservation in the range $\theta>1/3$ was given in \cite{CET}, while in \cite{Is} P. Isett proved the existence of dissipative solutions for any $\theta<1/3$ using the convex integration techniques introduced by C. De Lellis and L. Sz\'ekelyhidi in \cite{DS}.\\

As observed in \cite{Is15}, given any solution $v\in L^\infty((0,T);C^\theta(\T^3))$, it can be shown that the associated kinetic energy $e_v$ satisfies
\begin{equation}\label{en_reg}
\left| e_v(t)-e_v(s)\right| \leq C \left| t-s\right|^{\frac{2\theta}{1-\theta}} \qquad \forall t,s\in [0,T],
\end{equation}
which in particular implies the conservation if $\theta>1/3$, but also shows a peculiar H\"older regularity of the energy (see also \cite{CD} for an alternative proof). Throughout the work, we will sometimes use the shorter notation
\[
\theta^*:= \frac{2\theta}{1-\theta}.
\]

P. Isett and S.-J. Oh conjectured in \cite[Conjecture 1]{IsSuCON} that this exponent is optimal in the following sense

\begin{Con}
For any $\theta < \frac{1}{3}$, there exists a solution to \eqref{E} in the class $v\in C^\theta(\R\times \T^n)$ whose energy profile $e(t)$ fails to have any regularity above the exponent $\frac{2\theta}{1 - \theta}$, in the sense that $e_v(t) \notin W^{\frac{2\theta}{1-\theta}+\eps,p}(I)$, for every $\eps > 0$, $p \ge 1$ and every open time interval $I \subset\R$. Furthermore, the set of all such solutions v with the above property is residual (in the sense of
category) within the space of all $ C^\theta(\R\times \T^n)$ weak solutions, endowed with the topology from the $C^\theta$ norm.
\end{Con}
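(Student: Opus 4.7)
The plan is to approach the conjecture by a Baire category argument. First, I would fix a countable collection of open subintervals $\{I_k\}_{k\in\N}$ of $[0,T]$ with rational endpoints, and enumerate $n,p \in \N$ and $M \in \N$. Setting
\[
U_{k,n,p,M} := \bigl\{v \in X_\theta : \|e_v\|_{W^{\theta^* + 1/n, p}(I_k)} > M\bigr\},
\]
the set of solutions whose energy fails to lie in $\bigcup_{\eps>0,\,p\ge 1}W^{\theta^*+\eps,p}(I)$ for any open $I$ is exactly the countable intersection $\bigcap_{k,n,p,M} U_{k,n,p,M}$. Hence the whole statement reduces to showing that each $U_{k,n,p,M}$ is open and dense in the complete metric space $X_\theta$, and then invoking Baire.

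For openness, I would use that the embedding $X_\theta \hookrightarrow C^\theta(\T^3\times[0,T])$ implies uniform convergence $v_j \to v$, and therefore $e_{v_j}\to e_v$ uniformly on $[0,T]$. Since the Sobolev--Slobodeckij seminorm $\|\cdot\|_{W^{s,p}(I_k)}$ is lower semicontinuous under $L^1$ (and a fortiori uniform) convergence, each super-level set $U_{k,n,p,M}$ is open. This step should be essentially routine, modulo choosing $X_\theta$ to be complete and to embed continuously into $C^\theta$.

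The density step is the technical core. Given $v \in X_\theta$ and $\delta>0$, one has to construct $\tilde v \in X_\theta$ with $\|\tilde v - v\|_{C^\theta}<\delta$ and $\|e_{\tilde v}\|_{W^{\theta^*+1/n,p}(I_k)}>M$. The natural route is a convex integration scheme in the spirit of \cite{DS,Is}: starting from $v$, pass to a smooth strict subsolution and then add Mikado-type perturbations iteratively, prescribing at each stage the contribution to $e_{\tilde v} - e_v$. If one targets an energy increment $E$ supported in $I_k$ of class $C^{\theta^*}$ but designed to have arbitrarily large $W^{\theta^*+1/n,p}$ norm (e.g., a lacunary Weierstrass-type function with tunable amplitude), the resulting $\tilde v$ inherits $E$ in its energy profile. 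The upper bound $\theta^*$ on the energy regularity established in \cite{Is15} (inequality \eqref{en_reg}) ensures that such a target $E$ is compatible with $C^\theta$ proximity of $\tilde v$ to $v$.

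The principal obstacle, and the reason the paper introduces a tailored complete space $X_\theta$ rather than working in the natural topology of all $C^\theta$ weak solutions, is closing the loop between completeness and the density step: one must (i) define $X_\theta$ so that the convex integration output lies in $X_\theta$, (ii) arrange the perturbation so that its $C^\theta$ cost can be made $<\delta$ while its energy contribution is prescribed with $W^{\theta^*+1/n,p}$ seminorm $>M$, and (iii) guarantee that in the iterative scheme the wild behavior of the prescribed $E$ is not smoothed out in the $C^\theta$ limit. Balancing these constraints — essentially asking the Isett-type scheme to track the energy profile sharply at the threshold $\theta^*$ — is where I expect the majority of the technical work to concentrate, and is also the reason why the statement obtained is the partial, rather than the full, version of Isett--Oh's conjecture.
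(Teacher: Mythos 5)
Your high-level strategy matches the paper exactly: the countable decomposition over rational intervals, exponents and integrability indices is identical (you write it as a countable intersection of open dense sets $U_{k,n,p,M}$; the paper works with the complementary closed sets $C_{m,n,r,s}$ and shows they have empty interior, which is the same thing), the openness/closedness step is the same lower-semicontinuity observation, and the density step is attacked by the same route, namely running the Buckmaster--De Lellis--Sz\'ekelyhidi--Vicol scheme from a mollified nearby smoother solution while prescribing a wild energy profile. Your diagnosis of why $X_\theta$ is needed (the ``$\theta$--$\beta$ gap'': one must start from a genuinely $C^{\theta'}$, $\theta'>\theta$, solution for the inductive estimates to survive with a freely large parameter $a$) is also correct and is precisely what the paper explains in its final comments section.

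However, two details in your density sketch would not work as stated and need fixing. First, you cannot take the energy increment $E$ ``supported in $I_k$'': the inductive scheme requires the strict lower bound $e(t)-\int_{\T^3}|v_q|^2\,dx \geq \delta_{q+1}\lambda_q^{-\alpha}>0$ on \emph{all} of $[0,T]$, so an increment vanishing outside $I_k$ violates \eqref{e:energy_inductive_assumption} already at $q=0$. The paper instead adds $\frac{\rho}{2}f$ with $f\in C^{\eta^*}([0,T])\setminus W^{\eta^*}$ and $\frac12\le f\le 1$, everywhere positive; irregularity of $e=e_u+\frac{\rho}{2}f$ on $I_{\bar r}$ then follows because $e_u$ is in $W^{\theta^*+1/\bar m,q_{\bar s}}(I_{\bar r})$ (being the energy of a $C^{\theta'}$ solution with $(\theta')^*$ suitably close to $\theta^*$) while $f$ is not. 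Second, ``arbitrarily large $W^{\theta^*+1/n,p}$ norm with tunable amplitude'' is the wrong balance: if you shrink the amplitude to control $\|\tilde v-v\|_{C^\theta}$, you shrink a finite Sobolev norm back down. The clean choice is an $f$ with \emph{infinite} $W^{\theta^*+\eps,p}$ seminorm on every interval; then $\rho f$ keeps the infinite norm for every $\rho>0$, and $\rho$ can be sent to $\delta_1/\Gamma^2$, freely small with $a$, to close the $C^\theta$ proximity. Relatedly, $f$ must be chosen in $C^{\eta^*}$ with $\eta^*$ strictly above $\theta^*$ (not merely $C^{\theta^*}$) for Proposition~\ref{p:main} to apply with some $\beta<\eta<\sfrac13$, and it must be mollified at scale $\varepsilon_q\sim\delta_{q+2}^{1/\eta^*}$ inside the iteration precisely so that the prescribed wild profile is not smoothed away in the limit, which is your point~(iii).
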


In this paper we solve this conjecture in a slightly smaller space than $C^\theta$. This is due to some technical reasons and we postpone the discussion about this choice at the end of the introduction. \\

The first of our main results is the following

\begin{theorem}\label{t_exist}
Fix $\gamma > 0$ and $\theta\in (0,1/3)$ such that $\frac{2\theta}{1-\theta}+\gamma<1$. For every strictly positive $e \in C^{\frac{2\theta}{1-\theta}+\gamma}([0,T])$, there exists a vector field $v \in C^\theta( \T^3\times[0,T])$ that solves \eqref{E} in the distributional sense and such that
\[
 e(t)=\int_{\T^3}|v|^2(x,t)\, dx, \qquad \forall t \in [0,T].
\]
\end{theorem}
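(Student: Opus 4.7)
The plan is to prove the theorem via a convex integration scheme in H\"older spaces, adapted from the proofs of the Onsager conjecture by Isett and by Buckmaster--De Lellis--Sz\'ekelyhidi--Vicol, and modified so as to accommodate an energy profile of only $C^{\theta^*+\gamma}$ regularity rather than a smooth one. I would iteratively construct a sequence $(v_q, p_q, R_q)$ of triples solving the Euler--Reynolds system
\begin{equation*}
\partial_t v_q + \diver(v_q \otimes v_q) + \nabla p_q = \diver R_q, \qquad \diver v_q = 0,
\end{equation*}
with frequency parameter $\lambda_q = a^{b^q}$ and amplitude parameter $\delta_q = \lambda_q^{-2\beta}$, for $a \gg 1$, $b > 1$ close to $1$, and $\beta \in (\theta, 1/3)$. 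The aim is to pass to the limit $v = \lim_q v_q$ with convergence in $C^\theta(\T^3 \times [0,T])$.

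\textbf{Iteration and prescribed energy.} Starting from $v_0 \equiv 0$ and $R_0$ built from $e(t)\,\Id$, at each stage I would add a perturbation $w_{q+1} = v_{q+1} - v_q$ consisting of Mikado-type building blocks whose coefficients $a_\xi(x,t)$ depend on $R_q$ and on the residual energy density $\rho_q(t)\propto e(t) - \int_{\T^3}|v_q|^2\,dx$. The construction is simultaneously designed to cancel the stress, yielding $\|R_{q+1}\|_0 \lesssim \delta_{q+2}$, and to track the energy with a gap
\begin{equation*}
\left| e(t) - \int_{\T^3}|v_{q+1}|^2\, dx \right| \lesssim \delta_{q+2},
\end{equation*}
so that in the limit both the Reynolds stress vanishes and the prescribed profile is attained pointwise.

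\textbf{Main obstacle: low regularity of $e$.} The novelty, and the central technical difficulty, is that the amplitudes $a_\xi$ depend through $\rho_q$ on $e(t)$, which is only of class $C^{\theta^*+\gamma}$. The standard scheme requires several time derivatives of the amplitudes to control the transport error associated with $w_{q+1}$; I would therefore mollify $e$ at a scale $\ell_q$ chosen as a small power of $\lambda_q^{-1}$, and work with $\bar e$ satisfying $\|\partial_t^k \bar e\|_0 \lesssim \ell_q^{\theta^*+\gamma - k}$ and $\|e - \bar e\|_0 \lesssim \ell_q^{\theta^*+\gamma}$. The hypothesis $\theta^* + \gamma < 1$ is precisely what makes this viable: it guarantees $\|\partial_t \bar e\|_0 \lesssim \ell_q^{\theta^*+\gamma-1}$ with an exponent strictly above $-1$, which is compatible with the transport-stress estimates at the next step, while the condition $\gamma > 0$ ensures that the mollification error in the energy identity is absorbed by the gap $\delta_{q+1}$.

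\textbf{Passing to the limit and expected obstacle.} Closing the induction for $a$ large and $b$ sufficiently close to $1$, the standard inductive bounds $\|w_{q+1}\|_0 \lesssim \delta_{q+1}^{1/2}$ and $\|w_{q+1}\|_1 \lesssim \delta_{q+1}^{1/2}\lambda_{q+1}$ give $v_q \to v \in C^\theta(\T^3\times[0,T])$ and $R_q \to 0$ in $L^1$, so that $v$ is a distributional solution of \eqref{E}; the energy tracking then passes to the limit and yields $\int_{\T^3}|v|^2\,dx = e(t)$ for every $t$. I expect the main difficulty to be the careful tuning of the mollification of $e$ with the standard space--time mollification of the velocity and stress in the Isett--BDLSV scheme, so that the sharp threshold $\theta^*+\gamma < 1$ is indeed sufficient to close all the estimates without any loss in the final H\"older exponent $\theta$.
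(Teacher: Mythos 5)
Your strategy---rerun the BDSV convex integration scheme with a step-dependent mollification of the energy profile $e$ and tune the mollification scale against $\lambda_q,\delta_q$---is indeed what the paper does, so the approach is correct in outline. However, the mechanism you give for why $\theta^* + \gamma < 1$ is the right hypothesis is wrong, and this hides the essential new ingredient. The bound $\|\partial_t \bar e\|_0 \lesssim \ell_q^{\theta^*+\gamma-1}$ has exponent strictly above $-1$ automatically, since that is just $\theta^*+\gamma > 0$; the hypothesis $\theta^*+\gamma < 1$ only makes the exponent more negative, so it cannot be ``precisely what makes this viable'' in the way you describe. Its actual role is to guarantee that there exists $\eta < 1/3$ with $\eta^* = \theta^*+\gamma$, so that one can pick $\theta < \beta < \eta$ and then impose the new parameter constraint $1 < b < \sqrt{\eta^*/\beta^*}$, which is strictly more restrictive than the BDSV relation $b < (1-\beta)/(2\beta)$. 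The paper mollifies $e$ at the $q$-dependent scale $\varepsilon_q \sim \delta_{q+2}^{1/\eta^*}$, which gives $\|e - e_q\|_0 \le \delta_{q+2}/4$ (fitting inside the energy tolerance) and $|\partial_t e_q| \lesssim \delta_{q+2}^{1-1/\eta^*}$; the crucial closing step is to check that $\delta_{q+2}^{1-1/\eta^*} \lesssim \delta_{q+1}\delta_q^{1/2}\lambda_q$, which in exponents of $a$ reads $\bigl(\tfrac{1}{\eta^*}-1\bigr)b^2 + b - \tfrac{1}{\beta^*} < 0$ and is exactly where $b < \sqrt{\eta^*/\beta^*}$ enters. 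Your sketch neither identifies this constraint nor locates the specific mollification scale, so as written the inductive step is not shown to close.

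A secondary point: the inductive bound $\|w_{q+1}\|_0 + \lambda_{q+1}^{-1}\|w_{q+1}\|_1 \lesssim \delta_{q+1}^{1/2}$ yields convergence of $v_q$ in $C^0([0,T];C^\theta(\T^3))$, not in $C^\theta(\T^3\times[0,T])$. The theorem asserts space-time H\"older regularity, and passing from the former to the latter requires an additional argument; the paper invokes the time-regularity result of Colombo--De Rosa for H\"older solutions of Euler, a step your sketch omits.
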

The proof of this result follows closely the one of \cite{BDSV}. In particular, our Theorem \ref{t_exist} states the same conclusion of \cite[Theorem 1.1]{BDSV}, except for the fact that we are dropping the hypothesis on the smoothness of the function $e$. We remark that such sharpness of the  energy regularity was first proven in \cite{IsSuCON,IsSu} for any $\theta\in (0,1/5)$. Here we extend the result to the whole range $(0,1/3)$, even though it must be noted that in \cite{IsSuCON,IsSu} the energy profile is allowed to vanish, while in the scheme of \cite{BDSV}, and thus in ours, this is not. A small refinement of Theorem \ref{t_exist}, coupled with a suitable $h$-principle, also yields that weak solutions $v\in C^\theta(\T^3\times [0,T])$ belonging to a proper, yet quite large, subset of the space of all weak solutions have typically a kinetic energy $e_v$ which is not more regular than $C^{\frac{2\theta}{1-\theta}}([0,T])$. To state it in a more precise way we set
\begin{equation}\label{Xtheta}
X_\theta=\overline{\left\{ v \in \bigcup_{\theta '>\theta}C^{\theta '}(\T^3\times[0,T])  \, : \, v\, \text{ weakly solves } \eqref{E}\right\}}^{\|\cdot \|_{C^\theta_{x,t}}},
\end{equation}
endowed with the distance
\begin{equation}\label{distanza}
\dist (u,v)=\|u-v\|_{C^\theta_{x,t}}.
\end{equation}
It is clear that $(X_\theta, \dist)$ is a complete metric space.  We also  define  
\[
W^{\theta^*} =\bigcup_{I\subset [0,T]}\bigcup_{p \ge 1}\bigcup_{\eps > 0}W^{\theta^* + \eps,p}(I)
\]
and
\begin{equation}\label{Ytheta}
Y_\theta=\left\{ v \in X_\theta \, : \, e_v\in C^{\theta^*}([0,T])\setminus W^{\theta^*} \right\}.
\end{equation}
We prove the following 
\begin{theorem}\label{t_baire}
For any $\theta\in (0,1/3)$, the set $Y_\theta$ is residual in $X_\theta$.
\end{theorem}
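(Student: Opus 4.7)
The plan is to run the standard Baire category argument by writing $X_\theta\setminus Y_\theta$ as a countable union of closed nowhere dense sets. Every $v\in X_\theta$ is automatically $C^\theta$, so by the a priori estimate from \cite{Is15} its energy $e_v$ lies in $C^{\theta^*}([0,T])$. Enumerating the open subintervals of $[0,T]$ with rational endpoints as $\{I_n\}_{n\in\N}$ and using that $W^{\theta^*+\eps,p}(I)$ restricts to $W^{\theta^*+\eps,p}(I_n)$ for any rational $I_n\subset I$, one writes
\[
Y_\theta=\bigcap_{n,k,p,M\in \N} U_{n,k,p,M},\qquad U_{n,k,p,M}:=\bigl\{v\in X_\theta:\|e_v\|_{W^{\theta^*+1/k,p}(I_n)}>M\bigr\}.
\]
It therefore suffices to prove that each $U_{n,k,p,M}$ is open and dense in $X_\theta$. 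Openness is essentially immediate: if $v_j\to v$ in $C^\theta$ then $|v_j|^2\to |v|^2$ uniformly and hence $e_{v_j}\to e_v$ in $C^0([0,T])$; combined with weak compactness in the reflexive space $W^{\theta^*+1/k,p}(I_n)$ (or a Besov-type lower semicontinuity argument when $p=1$), this yields that the complement of $U_{n,k,p,M}$ is closed.

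The substance of the proof is density. Given $v_0\in X_\theta$ and $\delta>0$, by the definition of $X_\theta$ one picks a weak solution $v_1\in C^{\theta'}$ with $\theta'>\theta$ and $\|v_0-v_1\|_{C^\theta}<\delta/2$. The task is to produce $w\in X_\theta$ with $\|v_1-w\|_{C^\theta}<\delta/2$ and $\|e_w\|_{W^{\theta^*+1/k,p}(I_n)}>M$. To design the target energy profile, fix $\gamma\in(0,1/k)$ with $\theta^*+\gamma<1$ and a smooth cutoff $\phi$ supported in $I_n$, and set
\[
\eta_{A,N}(t):=AN^{-(\theta^*+\gamma)}\phi(t)\sin(Nt),\qquad e(t):=e_{v_1}(t)+c_0+\eta_{A,N}(t),
\]
where $c_0>0$ is a small constant chosen to guarantee $e>0$ on $[0,T]$. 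A direct computation gives $\|\eta_{A,N}\|_{C^{\theta^*+\gamma}}\lesssim A$ while $\|\eta_{A,N}\|_{W^{\theta^*+1/k,p}(I_n)}\gtrsim AN^{1/k-\gamma}$, and since $\gamma<1/k$ (so that $C^{\theta^*+\gamma}$ does not embed into $W^{\theta^*+1/k,p}$) a suitable choice of large $N$ and of $A=A(N)\to 0$ makes $e$ arbitrarily close to $e_{v_1}+c_0$ in $C^{\theta^*+\gamma}$ yet forces $\|e\|_{W^{\theta^*+1/k,p}(I_n)}\ge 2M$.

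One then applies a suitable refinement of Theorem \ref{t_exist}, initialized at the smooth solution $v_1$ and targeting the energy $e$: the output is a weak solution $w\in C^\theta(\T^3\times[0,T])$ of \eqref{E} with $e_w=e$ and $\|w-v_1\|_{C^\theta}<\delta/2$. That such $w$ actually belongs to $X_\theta$ can be arranged by running the scheme with slightly smoother target energies approximating $e$, so that $w$ is realized as a $C^\theta$-limit of $C^{\theta''}$ exact Euler solutions with $\theta''>\theta$. The triangle inequality then yields $\|v_0-w\|_{C^\theta}<\delta$ and $\|e_w\|_{W^{\theta^*+1/k,p}(I_n)}>M$, closing the density step.

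The main obstacle is precisely the quantitative refinement of Theorem \ref{t_exist} used above. Unlike the standard BDSV scheme, which starts from zero and produces a solution of fixed energy, the required version must be initialized at a given smooth solution $v_1$, driven by the small energy mismatch $e-e_{v_1}$ (with $e$ only in $C^{\theta^*+\gamma}$), and arranged so that the accumulated Nash-type perturbations are controlled not merely in $C^0$ but in the stronger $C^\theta$ norm, exclusively in terms of the size of $e-e_{v_1}$ in a suitable functional norm. Balancing these requirements against the need to preserve the sharp $C^{\theta^*+\gamma}$-smallness of the energy discrepancy, while simultaneously producing $w$ inside $X_\theta$ (and not in the larger space of all $C^\theta$ weak solutions), is the principal technical hurdle of the proof.
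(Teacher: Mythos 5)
Your outline tracks the paper's strategy closely: decompose the exceptional set into countably many closed (respectively, open dense) pieces indexed by rational intervals, exponents, and thresholds, verify the closedness/openness via lower semicontinuity of the Gagliardo seminorm, and then prove density by applying a convex-integration $h$-principle (essentially the paper's Proposition~\ref{p:main}) starting from a $C^{\theta'}$-solution $v_1$ guaranteed by the definition of $X_\theta$. That much is essentially the paper's argument in complementary form.

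Where you genuinely diverge is in the choice of the energy increment. The paper fixes once and for all a single ``Weierstrass-type'' profile $f \in C^{\eta^*}([0,T])\setminus W^{\eta^*}$ with $1/2 \le f \le 1$ and sets $e = e_u + \tfrac{\rho}{2}f$ with $\rho = \delta_1/\Gamma^2$. Because $\rho f \notin W^{\theta^*+1/\bar m,q_{\bar s}}(I_{\bar r})$ for \emph{every} $\rho > 0$ (and the contradiction hypothesis forces $e_u$ to be in that Sobolev space), one obtains $\|e\|_{W^{\theta^*+1/\bar m,q_{\bar s}}(I_{\bar r})} = \infty$ uniformly in the scheme parameter $a$: no tuning of the energy perturbation against $\delta_1$ is required, only the scaling $\rho \sim \delta_1$ forced by \eqref{e:energy_inductive_assumption}. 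Your choice $c_0 + AN^{-(\theta^*+\gamma)}\phi(t)\sin(Nt)$ is smooth, so you only achieve $\|e\|_{W^{\theta^*+1/k,p}(I_n)}\ge 2M$, not $=\infty$; this is sufficient for the density claim but obliges you to run a parameter balance that you do not spell out. Concretely, to satisfy \eqref{e:energy_inductive_assumption} you must take $c_0 \approx \delta_1/\Gamma^2$ and $|\eta_{A,N}| \ll c_0$, while also $AN^{1/k-\gamma} \gtrsim M + \|e_{v_1}\|_{W^{\theta^*+1/k,p}(I_n)}$; and $a_0$ in Proposition~\ref{p:main} depends on $E = \|e\|_{\eta^*}$, which in turn involves $A$. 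This circle closes (take $A = C N^{\gamma-1/k}\to 0$ so that $E$ is bounded independently of $a,N$; then pick $a$, then $N$), but making this legible is precisely the kind of bookkeeping the paper performs in Remark~\ref{REM}, and is absent from your sketch. You also need to handle the case $\|e_{v_1}\|_{W^{\theta^*+1/k,p}(I_n)}=\infty$ separately (trivially done), whereas the paper's contradiction structure gives $e_u \in W^{\theta^*+1/\bar m, q_{\bar s}}(I_{\bar r})$ for free from $u \in \mathcal C$.

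One remark in your proposal is genuinely confused and should be removed: the claim that membership $w \in X_\theta$ ``can be arranged by running the scheme with slightly smoother target energies approximating $e$.'' No such approximation is needed. If $e \in C^{\eta^*}$ for some $\eta > \theta$ (which holds automatically since $e_{v_1}\in C^{(\theta')^*}$ and $\eta_{A,N}$ is smooth), then choosing $\theta < \theta'' < \beta < \eta$ in Proposition~\ref{p:main} already gives $w \in C^{\theta''}(\T^3\times[0,T])$ with $\theta'' > \theta$, hence $w \in X_\theta$ directly. Finally, you correctly identify the ``refinement of Theorem~\ref{t_exist}'' — initializing the scheme at a mollified $C^{\theta'}$-solution, controlling the perturbation in $C^\theta$ rather than $C^0$, and tracking the $a$-dependence — as the crux; this is exactly what the paper establishes in its Proposition~\ref{p:main} together with the mollification and rescaling estimates in the proof of Theorem~\ref{t_baire}, and it is the part of your argument that remains an outline rather than a proof.
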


Baire Theorem asserts that a complete metric space is not meager. Therefore, the previous Theorem yields some immediate corollaries.  

First, it implies that the typical solution in $X_\theta$ is not of bounded variation, thus not monotonic, in any open subset of $[0,T]$. Thus, Theorem \ref{t_baire} shows a very irregular behaviour of the energy of solutions, in sharp contrast with the conservation of the energy in the case $\theta > 1/3$. We refer the reader to \cite{IsSuCON,IsSu} for further discussions.

A second immediate corollary of Theorem \ref{t_baire} is that, for every $\theta \in (0,1/3)$, there exists a weak solution $v$ of \eqref{E} such that $e_v \in C^{\theta^*}([0,T])$ but $e_v \notin C^{\theta^* + \gamma}([0,T])$, for any $\gamma > 0$. Let us note in passing that this also yields a weak $C^{\theta}(\T^3\times [0,T])$ solution of \eqref{E} that is not in $C^{\theta + \gamma}(\T^3\times [0,T])$, for any $\gamma$.
Indeed, from \eqref{en_reg} it is clear that $Y_\theta$ can not contain solutions $v$ that are more H\"older regular than $C^\theta(\T^3\times [0,T])$. While the residuality property implies that the kinetic energy of many  $C^\theta(\T^3\times [0,T])$ solutions enjoys the sharp regularity \eqref{en_reg}, it must be noted that $X_\theta$ might not contain all the $C^\theta(\T^3\times [0,T])$ solutions of Euler, since in general not all the $C^\theta(\T^3\times [0,T])$ functions can be obtained as limit of more regular ones. In particular it is not clear to the authors if the same statement is true if one considers as a complete metric space in Theorem \ref{t_baire}  all the $ C^\theta(\T^3\times [0,T]) $ solutions of \eqref{E}, endowed with the same distance $\dist (u,v):=\|u-v\|_{C^\theta_{x,t}}$. This would solve \cite[Conjecture 1]{IsSuCON} completely. We refer the reader to Section \ref{comm} for a more detailed discussion on this problem.\\

Another natural question is about the topological property of the smooth solutions in this setting. To this end we define $S$ to be the set of all smooth solutions of \eqref{E}, and similarly as before we also set
$$
C_\theta=\left\{ v\in C^\theta(\T^3 \times [0,T]) \, : \, v \, \textit{ weakly solves } \eqref{E} \right\},
$$
together with the natural distance \eqref{distanza}. Note at first that, as a corollary of Theorem \ref{t_baire}, one already gets that $S \subset Y_\theta^c$  which obviously implies that $S$ is a meager set in $X_\theta$. However, in this case, a stronger result can be proved

\begin{theorem}\label{t_smooth}
For any $\theta\in(0,1/3)$, the set $S$ of all smooth solutions of \eqref{E} is nowhere dense in $C_\theta$.
\end{theorem}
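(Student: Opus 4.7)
The plan is to derive Theorem \ref{t_smooth} cleanly by reducing it to Theorem \ref{t_baire}, exploiting the (essentially tautological) inclusion $\overline S\subseteq X_\theta$.

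\emph{First}, I identify a closed over-set for $\overline S$. The functional $v\mapsto e_v$ is continuous from $(C_\theta,\|\cdot\|_{C^\theta_{x,t}})$ to $C^0([0,T])$: if $v_n\to v$ in $C^\theta$ then $v_n\to v$ uniformly on $\T^3\times[0,T]$, so
\[
\sup_{t\in[0,T]}|e_{v_n}(t)-e_v(t)|\le\bigl(\|v_n\|_{L^\infty_{x,t}}+\|v\|_{L^\infty_{x,t}}\bigr)|\T^3|\,\|v_n-v\|_{L^\infty_{x,t}}\longrightarrow 0.
\]
Every classical smooth solution of \eqref{E} conserves kinetic energy, so for $u\in S$ the function $e_u$ is constant on $[0,T]$; a uniform limit of constants is constant, hence $\overline S\subseteq \mathcal C:=\{v\in C_\theta:\,e_v\text{ is constant on }[0,T]\}$. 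Simultaneously, by the definition \eqref{Xtheta} one has $S\subseteq X_\theta$ (smooth solutions lie in every $C^{\theta'}$ with $\theta'>\theta$), and since $X_\theta$ is $C^\theta$-closed by construction, $\overline S\subseteq X_\theta$ as well.

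\emph{Second}, I show that $\overline S$ has empty interior in $C_\theta$, which is equivalent to Theorem \ref{t_smooth}. Fix $u\in\overline S$ and $\epsilon>0$, and seek $v\in C_\theta\setminus\overline S$ with $\|v-u\|_{C^\theta_{x,t}}<\epsilon$. By Theorem \ref{t_baire}, $Y_\theta$ is residual in the complete metric space $(X_\theta,\dist)$, so by Baire's theorem $Y_\theta$ is dense in $X_\theta$; since $u\in\overline S\subseteq X_\theta$ by the first step, I may select $v\in Y_\theta$ with $\|v-u\|_{C^\theta_{x,t}}<\epsilon$. By the defining formula \eqref{Ytheta} of $Y_\theta$, the energy $e_v\in C^{\theta^*}([0,T])\setminus W^{\theta^*}$; in particular $e_v$ fails to lie in $W^{\theta^*+\eps,p}$ for any $\eps>0$, $p\ge 1$, so it is certainly not constant, and $v\notin\mathcal C\supseteq\overline S$. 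Because $Y_\theta\subseteq X_\theta\subseteq C_\theta$, this $v$ is the required element of $C_\theta\setminus\overline S$ in the $\epsilon$-ball around $u$.

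\emph{The main obstacle} is entirely absorbed into Theorem \ref{t_baire}: once that is available, the argument above requires only verifying the two inclusions $\overline S\subseteq X_\theta$ and $\overline S\subseteq\mathcal C$, both of which are immediate from the definitions and from continuity of the energy map. This is consistent with the introduction's remark that Theorem \ref{t_smooth} is obtained by the same Baire-plus-convex-integration ideas used for Theorem \ref{t_baire} but does not require the construction of an auxiliary metric space in its statement, since the conclusion already lives in the natural space $(C_\theta,\|\cdot\|_{C^\theta_{x,t}})$, with $X_\theta$ appearing only as an intermediate device for transporting the density of $Y_\theta$ back into the larger space.
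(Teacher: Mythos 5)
Your proof is correct, but it takes a genuinely different route from the paper's. The paper proves Theorem \ref{t_smooth} directly: given $u_0\in\overline S$, it picks a nearby genuinely smooth solution $u$, rescales it to form an admissible starting triple for Proposition \ref{p:main}, prescribes a \emph{smooth but non-constant} energy profile $e(t)=E_{\tilde u}+\tfrac{\delta_1}{2}f(t)$, and runs the convex integration scheme to produce a $C^\theta$-close weak solution $v$ with $e_v$ non-constant, so that $v$ cannot be a uniform limit of smooth solutions. This makes no use of Theorem \ref{t_baire} at all, and the introduction frames Theorem \ref{t_smooth} as \emph{not} being a mere corollary of the Baire residuality result (the obvious corollary there only gives $S$ meager in $X_\theta$, not nowhere dense in the larger space $C_\theta$). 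Your argument upgrades that obvious corollary by spotting the two inclusions $\overline S\subseteq X_\theta$ (smooth solutions live in every $C^{\theta'}$, and $X_\theta$ is closed) and $\overline S\subseteq\{v:e_v\text{ constant}\}$ (continuity of $v\mapsto e_v$ plus energy conservation for smooth solutions). With those in hand, the density of $Y_\theta$ in $X_\theta$---a consequence of Theorem \ref{t_baire} and Baire's theorem---immediately supplies, near any point of $\overline S$, a solution $v\in Y_\theta\subseteq C_\theta$ whose energy is non-constant (being outside $W^{\theta^*}$), hence $v\notin\overline S$. What each approach buys: the paper's is self-contained at the level of Proposition \ref{p:main} and constructs the witness explicitly, while yours is shorter and more conceptual, exhibiting Theorem \ref{t_smooth} as a genuine corollary of Theorem \ref{t_baire} once the containment $\overline S\subseteq X_\theta$ is used to transport density from $X_\theta$ out to $C_\theta$---a slightly sharper observation than the paper's stated remark that Theorem \ref{t_baire} only yields meagerness of $S$ in $X_\theta$.
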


We recall that in a complete metric space,  a nowhere dense set is a set whose closure has empty interior. Thus Theorem \ref{t_smooth} is stronger with respect to the corollary that Theorem \ref{t_baire} would give from two points of view. Firstly, every nowhere dense set is also meager. Secondly, the corresponding topological property it is proved in a larger, and also more natural, space $C_\theta$.

\subsection*{Aknowledgements}

The authors would like to thank Camillo De Lellis for his interest in this problem and the useful discussions about it.

\section{Notations and main inductive Proposition}\label{main_prop}
Along the paper, we will consider the flat torus $\T^3$ as spatial domain, identifying it with the 3-dimensional cube $[0,1]^3 \subset \R^3 $. Thus for any $f:\T^3\to\R^3$ we will  always work with its periodic extension to the whole space.
We will follow the construction given in \cite{BDSV} dropping the hypothesis of the smoothness of the energy. We start by introducing the notation and some basic properties of the incompressible Euler equations.

\subsection{Notation}

In the following $N\in \N$, $\alpha\in (0,1)$ and $\kappa$ is a multi-index. We introduce the usual (spatial) 
H\"older norms as follows.
First of all, the supremum norm is denoted by $\|f\|_0:=\sup_{\T^3\times [0,T]}|f|$. We define the H\"older seminorms 
as
\begin{equation*}
\begin{split}
[f]_{N}&=\max_{|\kappa|=N}\|D^{\kappa}f\|_0\, ,\\
[f]_{N+\alpha} &= \max_{|\kappa|=N}\sup_{x\neq y, t}\frac{|D^{\kappa}f(x, t)-D^{\kappa}f(y, t)|}{|x-y|^{\alpha}}\, ,
\end{split}
\end{equation*}
where $D^\kappa$ are space derivatives only.
The H\"older norms are then given by
\begin{eqnarray*}
\|f\|_{N}&=&\sum_{j=0}^N[f]_j\\
\|f\|_{N+\alpha}&=&\|f\|_N+[f]_{N+\alpha}.
\end{eqnarray*}
Moreover, we will write $[f (t)]_\alpha$ and $\|f (t)\|_\alpha$ when the time $t$ is fixed and the norms are computed for the restriction of $f$ to the $t$-time slice. On the other hand we will explicitly write $\| f\|_{C^\alpha_{x,t}} $ when the H\"older norm is computed in both the space and time variables.\\
We also recall that, for an interval $I \subset \R$, and for $\theta \in (0,1)$, $p \ge 1$, 
\[
u \in W^{\theta,p}(I)
\]
if and only if $u \in L^p(I)$ and
\[
[u]_{W^{\theta,p}(I)}:=\left(\int_{I\times I}\frac{|u(x) - u(y)|^p}{|x - y|^{1 + p\theta}}dxdy\right)^{\frac{1}{p}} < + \infty.
\]
Let  $\varphi \in C^ \infty_c (B_1(0))$ be a standard non negative  kernel such that  $\int_{B_1(0)} \varphi(x) dx=1$. For any $ \delta>0 $ we define  $ \varphi_\delta:=\delta^{-3} \varphi(\frac{x}{\delta})$ and we denote the mollifications of a function $f$ as usal as
\[
f_\delta:=f*\varphi_\delta.
\] 
We recall the following standard estimates on the mollification of both H\"older continuous functions and vector fields.
\begin{prop}\label{p:moll}
For any $\theta \in(0,1)$ we have
\begin{equation}\label{mollest2}
\|f_\delta -f \|_{0} \leq \delta^{\theta} [f]_{\theta}.
\end{equation}
Moreover, for any $N\geq 0$, there exists a constant $C>0$ depending on $N$, such that 
\begin{align}
  \| f_\delta * f_\delta -(f * f)_\delta \|_{N} &\leq C \delta^{2\theta-N} [f]^2_{\theta}\,,\label{mollest4} \\
\| \ f_\delta\|_{N+1} &\leq C \delta^{\theta-N-1} [f]_{\theta}. \, \label{mollest3} 
\end{align}
\end{prop}

Given a metric space $(X,\dist)$, a subset $Y\subset X$ is said to be residual if its complement $Y^c$ is contained in a countable union of closed sets with empty interior. The set $Y^c$ is then called meager.\\
\\
Finally, we also recall that equations \eqref{E} are invariant under the following transformation
\begin{equation}\label{recaling}
v(x,t)\mapsto v_\Gamma (x,t) := \Gamma v(x,\Gamma t) \quad \text{ and } \quad p(x,t)\mapsto p_\Gamma (x,t) :=\Gamma^2p(x,\Gamma t),
\end{equation}
for any $\Gamma>0$, meaning that if $(v,p)$ solves \eqref{E} in $ \T^3\times [0,T]$  then $(v_\Gamma, p_\Gamma)$ solves \eqref{E} in $ \T^3\times [0,T/ \Gamma].$

\subsection{Inductive proposition}
As said, the proof is based on a modification of the convex integration scheme of \cite{BDSV}, that we are now going to explain.
\\
\\
Let $q\geq 0$ be a natural number. At a given step $q$ we assume to have a smooth triple $(v_q,p_q,\mathring{R}_q)$ solving the Euler-Reynolds system, namely such that
\begin{equation}\label{NSR}
\left\{\begin{array}{l}
\partial_t v_q + \diver (v_q\otimes v_q) + \nabla p_q =\diver\mathring{R}_q\\ \\
\diver v_q = 0\, ,
\end{array}\right.
\end{equation} 
to which we add the constraints 
\begin{align}
&\tr \mathring{R}_q=0 \,,\label{e:trace_free} \\
\int_{\T^3} p_q& (x,t)\, dx = 0\,.\label{e:press_const}
\end{align}

To measure the size of the approximate solution $v_q$ and the error $\mathring{R}_q$, we use a frequency $\lambda_q$ and an amplitude $\delta_q$, defined through these relations:
\begin{align}
\lambda_q &= 2\pi \lceil a^{(b^q)}\rceil, \label{e:freq_def}\\
\delta_q &=\lambda_q^{-2\beta}, \label{e:size_def}
\end{align}
where $\lceil x\rceil $ denotes the smallest integer $n\geq x$, $a>1$ is a  large parameter, $b>1$ is close to $1$ and $0<\beta<1/3$. The parameters $a$ and $b$ will depend on $\beta$ and on other quantities. We proceed by induction, assuming the estimates
\begin{align}
\|\mathring R_q\|_{0}&\leq  \delta_{q+1}\lambda_q^{-3\alpha}\label{e:R_q_inductive_est}\\
\|v_q\|_1&\leq M \delta_q^{\sfrac12}\lambda_q\label{e:v_q_inductive_est}\\
\|v_q\|_0 & \leq 1- \delta_q^{\sfrac12}\label{e:v_q_0}\\
\delta_{q+1}\lambda_q^{-\alpha} &\leq e(t)-\int_{\T^3}|  v_q|^2\,dx\leq \delta_{q+1}\label{e:energy_inductive_assumption}
\end{align}
where $0 < \alpha  < 1$ is a small parameter to be chosen suitably, in dependence of $\beta$ and other quantities, and $M$ is a universal constant.\\
\\
We now state the main inductive proposition 
\begin{prop}\label{p:main} 
There exists a universal constant $M$ with the following property. Let $0<\beta<\eta<1/3$, $E> 0$, and
\begin{equation}\label{e:b_beta_rel}
1<b<\sqrt{\frac{\eta^*}{\beta^*}}.
\end{equation}
Then there exists an $\alpha_0$ depending on $\beta$, $\eta$ and $b$, such that for any $0<\alpha<\alpha_0$ there exists an $a_0$ depending on $\beta$, $b$, $\alpha$, $\eta$, $E$  and $M$, such that for any $a\geq a_0 $ the following holds: given a  triple $(v_q, p_q,\mathring R_q)$ solving \eqref{NSR}-\eqref{e:press_const} and satisfying the estimates \eqref{e:R_q_inductive_est}--\eqref{e:energy_inductive_assumption} for some strictly positive $e\in C^{\eta^*}([0,T])$ with $$\|e\|_{\eta^*} \le E,$$ there exists a solution  $(v_{q+1}, p_{q+1}, \mathring R_{q+1})$ to \eqref{NSR}-\eqref{e:press_const} satisfying \eqref{e:R_q_inductive_est}--\eqref{e:energy_inductive_assumption} for the same function $e$ with $q$ replaced by $q+1$. Moreover, we have 
\begin{equation}
\|v_{q+1}-v_q\|_0+\frac{1}{\lambda_{q+1}}\|v_{q+1}-v_q\|_1 
\leq M\delta_{q+1}^{1/2}\label{e:v_diff_prop_est}.
\end{equation}
\end{prop}
The reader may notice that there are four main differences with respect to \cite[Proposition 2.1]{BDSV}. First of all the statement is fomulated in a slightly different way than in \cite[Proposition 2.1]{BDSV}, in order to highlight the fact that the parameter $a_0$ is uniform once one has chosen the $C^{\eta^*}([0,T])$ norm of $e$. Moreover, we drop the smoothness hypothesis on the function $e$, we allow the parameter $a_0$ to depend on $E$ and finally we suppose in \eqref{e:b_beta_rel} a different relation between the parameters $b$ and $\beta$.  Notice that our relation \eqref{e:b_beta_rel} is more restrictive than the one used in \cite{BDSV}, indeed we have
\begin{equation}\label{b_comecamillo}
1<b<\sqrt{\frac{\eta^*}{\beta^*}}<\sqrt{\frac{1}{\beta^*}}=\sqrt{\frac{1-\beta}{2\beta}}<\frac{1-\beta}{2\beta}.
\end{equation}

\section{Proof of the main Theorems}
In this section we prove our two main theorems. As in \cite{BDSV}, the proof of Theorem \ref{t_exist} is a direct consequence of Proposition \ref{p:main} and we are going to prove it for the reader's convenience. Theorem \ref{t_baire} will still be an application of the iterative proposition. Indeed, through a $h$-principle comparable to \cite[Theorem 1.3]{BDSV}, we will be able to write the set $Y^c_\theta$ as a countable union of closed set with empty interior.
\subsection{Proof of Theorem \ref{t_exist}} First of all, fix $\gamma, \theta$ and $e$ as in the statement of the theorem. In order to apply Proposition \ref{p:main} we choose  $\eta\in (0,1/3)$ to be the only solution of $\eta^*=\theta^*+\gamma$ and $\beta$ such that  $\theta<\beta<\eta$. Consequently we also fix the parameters $b$ and $\alpha$ appearing in the statement of Proposition \ref{p:main}, the first satisfying \eqref{e:b_beta_rel} and the second lower than the threshold $\alpha_0$. As done in \cite[Proof of Theorem 1.1]{BDSV}, by using the invariance of the Euler equations under the rescaling \eqref{recaling} we can further assume that the rescaled energy profile, say $\tilde e$, satisfies 
$$
\delta_1\lambda_0^{-\alpha}\leq \inf_t \tilde e(t)\leq \sup_t \tilde e(t)\leq \delta_1.
$$
Then we can apply inductively Proposition \ref{p:main} starting with the triple $(v_0,p_0,\mathring{R}_0)=(0,0,0) $. Indeed $v_0$ and $\mathring{R}_0$ trivially satisfy estimates \eqref{e:R_q_inductive_est}-\eqref{e:v_q_0} and by the rescaling on the energy we also get \eqref{e:energy_inductive_assumption} for $q=0$. By \eqref{e:v_diff_prop_est} we have 
\begin{equation}\label{stimalimite}
\sum_{q=0}^{\infty} \|v_{q+1}-v_q\|_\theta \lesssim \sum_{q=0}^{\infty} \|v_{q+1}-v_q\|_0^{1-\theta} \|v_{q+1}-v_q\|_1^\theta \lesssim  \sum_{q=0}^{\infty}  \delta_{q+1}^{\sfrac12}\lambda_{q+1}^\theta\lesssim  \sum_{q=0}^{\infty}  \lambda_{q+1}^{\theta-\beta}<\infty
\end{equation}
and hence $v_q$ converges in $C^0([0,T/\Gamma];C^\theta(\T^3))$ to a function $v$. Moreover, by \cite[Theorem 1.1]{CD}, we have that $v\in C^\theta(\T^3\times [0,T/\Gamma])$. By taking the divergence of the first equation in  \eqref{NSR}, we get that $p_q$ is the unique $0$-average solution of 
$$
-\Delta p_q=\diver \diver (v_q \otimes v_q-\mathring{R}_q)
$$
and since $v_q\otimes v_q -\mathring{R}_q\rightarrow v\otimes v$ uniformly, $p_q$ is also converging to some function $ p$ in $L^r(\T^3\times [0,T/\Gamma])$, for any $r<\infty$. Hence, it is clear that the limit couple $(v,p)$ solves \eqref{E} in the distributional sense in $\T^3\times [0,T/\Gamma]$. Finally, by \eqref{e:energy_inductive_assumption}, as $q\rightarrow \infty$, we also get
$$
\tilde e(t)=\int_{\T^3} |v|^2(x,t) \,dx \quad \forall t\in [0,T/\Gamma].
$$
The proof of the theorem is concluded by undoing the rescaling \eqref{recaling} and mapping back to the original time interval $[0,T]$.

\subsection{Proof  of Theorem \ref{t_baire}} We want to show that $Y_\theta^c$ is meager in $X_\theta$. Let us enumerate the intervals with rational endpoints inside $[0,T]$, $(I_r)_{r \in \N}$, and let $(q_s)_s$ be a countable and dense subset of $[1,+\infty)$. By \eqref{Ytheta} we can write
$$
Y^c_\theta=\bigcup_{m,n,r,s\in \N} C_{m,n,r,s},
$$
where 
$$
C_{m,n,r,s}\doteq\left\{ v\in X_\theta \,: \, \|e_v\|_{W^{\theta^*+\frac{1}{m},q_s}(I_r)}\leq n  \right\}.
$$
It is easily seen that $C_{m,n,r,s}$ are closed subsets of  $X_\theta$. Suppose by contradiction that there exist $\overline{m},\overline{n},\overline{r},\overline{s}$ such that $\mathcal C:=C_{\overline{m},\overline{n},\overline{r},\overline{s}}$ has a nonempty interior. Thus there exist $\varepsilon>0$ and $u_0\in \mathcal C$  such that 
\begin{equation}\label{palla}
B_\varepsilon(u_0)\doteq\{v \in X_\theta: \|v - u_0\|_{C^\theta_{x,t}} \le \varepsilon\} \subset \mathcal C.
\end{equation}
By the definition of $X_\theta$, we can find a solution of \eqref{E}, $u \in C^{\theta'}(\T^3\times [0,T])$, $\theta' >\theta$, such that $\|u - u_0\|_{C^{\theta}_{x,t}} \le \frac{\varepsilon}{3}$. Moreover, \eqref{palla} implies that
\begin{equation}\label{palla2}
B_{\frac{\varepsilon}{2}}(u) \subset \mathcal C.
\end{equation}
From now on, we assume that 
\begin{equation}\label{assu}
\theta^* <(\theta')^*< \theta^* + \frac{1}{2\overline{m}}.
\end{equation}
This can be done simply by choosing a possibly smaller $\theta'$ and exploiting the embedding $C^\alpha(\T^3\times [0,T]) \subset C^\beta(\T^3\times [0,T])$, for any $\beta \le \alpha$.
Now fix parameters $\theta'',\beta,\eta > 0$ such that $\theta < \theta'< \theta''< \beta <\eta$ and for which $\eta^* < \theta^* + \frac{1}{2\overline{m}}$. This can be done in view of $\eqref{assu}$. Fix moreover a function (of time only) $f \in C^{\eta^*}([0,T])\setminus W^{\eta^*}$, such that $ 1/2 \leq f\leq 1 $ and set
\begin{equation}\label{form}
e(t)=\int_{\T^3} |u|^2 \, dx +\frac{\rho}{2}f(t),
\end{equation}
for some small parameter $\rho > 0$. These choices imply that the energy $e=e(t)$ satisfies
\begin{equation}\label{en_not_reg}
e\not \in W^{\theta^*+\frac{1}{\bar m},q_{\bar s}}(I_{\bar r}).
\end{equation}
Now we claim that, if $\rho$ is chosen sufficiently small, depending on $\theta, \theta',\theta'',\beta,\eta$ and $\bar m$, then there exists a solution of \eqref{E} $v \in C^{\theta''}(\T^3\times [0,T])$ such that 
\begin{align}
&\|u - v\|_{C^\theta_{x,t}}\le \frac{\varepsilon}{3}, \label{vicinanza}\\
&e_v(t) = e(t),\quad \forall t \in [0,T] \label{EN}.
\end{align}
It is clear that the claim implies a contradiction with \eqref{palla2}. Indeed, since $\theta'' > \theta$, we have $v \in X_\theta$. Therefore, by \eqref{palla2} and \eqref{vicinanza}, we get $e_v \in W^{\theta^*+\frac{1}{\bar m},q_{\bar s}}(I_{\bar r})$, but this is in contradiction with \eqref{EN} and \eqref{en_not_reg}. This would conclude the proof of the present theorem, hence we are only left with the proof of the claim.

\medskip

To prove the claim, we want to apply Proposition \ref{p:main}. First, as in the proof of Theorem \ref{t_exist}, we use the rescaling \eqref{recaling} on $u$ for some  $\Gamma$ to be determined later. In this way, we obtain a new solution  $\tilde u \in C^{\theta'}(\T^3\times [0,T/\Gamma])$. In fact, for any map $w \in C^{\theta'}(\T^3\times [0,T])$, we denote with $\tilde w$ the map obtained through the rescaling \eqref{recaling} with such $\Gamma$. Note that 
\begin{equation}\label{comparable}
\|\tilde w_1 - \tilde w_2\|_{\theta'} = \Gamma \|w_1 - w_2\|_{\theta'},\qquad  \forall w_1,w_2 \in C^{\theta'}(\T^3\times [0,T]),
\end{equation}
and 
\begin{equation}\label{transf}
e_{\tilde w}(t) = \Gamma^2 e_{w}(\Gamma t),  \quad  \forall t \in [0,T/\Gamma].
\end{equation}
Therefore, we also define
\begin{equation}\label{defen}
\tilde e(t) \doteq \Gamma^2 e(\Gamma t),\quad  \forall t \in [0,T/\Gamma].
\end{equation}

However, Proposition \ref{p:main} requires a smooth starting triple. For this reason we consider a space-time mollification  of $\tilde u$, $u_\delta\doteq ( \tilde u * \varphi_\delta) *\psi_\delta$, where $\varphi_\delta$ and $\psi_\delta$ are standard mollifiers in space and time respectively and $\delta > 0$ is a parameter that will be fixed later on.

Note that here we are mollifying in the time variable which belongs to a compact interval. To do that correctly, one should introduce 
\[
	u'(x,s):=\frac{1}{\gamma}\tilde u \left(x,\frac{s+\sigma}{\gamma} \right).
	\]
 This solves the Euler equations on $\T^3\times\left[-\sigma, \gamma \frac{T}{\Gamma} - \sigma \right]$. Furthermore,  by choosing $\sigma > 0$ sufficiently small and $\gamma > 1$ sufficiently close to $1$ we can make $ \|u' - \tilde u \|_\theta$ arbitrarily small. Finally, we can choose $\gamma$ in terms of $\sigma$ in such a way that $[0,T/\Gamma]$ is contained in the interior of $\left[-\sigma, \gamma \frac{T}{\Gamma} - \sigma \right]$. One can now mollify $u'$ on $[0,T/\Gamma]$ instead of $\tilde u$ and proceed with the rest of the proof. For simplicity, we will just keep using $\tilde u$.

  Now the newly introduced map $u_\delta$ is smooth and solves the following Euler-Reynolds system
$$
\partial_t u_\delta +\diver (u_\delta \otimes u_\delta) +\nabla p_\delta=\diver \mathring R_\delta,
$$
where $\mathring R_\delta \doteq u_\delta \mathring \otimes u_\delta - (\tilde u \mathring \otimes \tilde u)_\delta$ and the trace part of the commutator $u_\delta  \otimes u_\delta - (\tilde u  \otimes \tilde u)_\delta$ is inside the pressure $p_\delta$.\\
\\
We now want to take $(u_\delta, p_\delta, R_\delta)$ as a starting point for the iterative scheme given by Proposition \ref{p:main}. In order to do so, we need to guarantee estimates \eqref{e:R_q_inductive_est}, \eqref{e:v_q_inductive_est}, \eqref{e:v_q_0} and to find $\rho >0$ for which also \eqref{e:energy_inductive_assumption} is satisfied with $q = 0$. Recall the definition of $\lambda_q$ and $\delta_q$ of \eqref{e:size_def} and \eqref{e:freq_def}. We set
\[
 \Gamma\doteq \delta_1^{\sfrac{1}{2}}\lambda_1^{\theta +\alpha}\quad \text{ and } \quad \rho \doteq \frac{\delta_1}{\Gamma^2},
\]
and we postpone the choice of $\delta$.\ Note that with this choice, obviously  $\rho$ depend on the parameters appearing in Proposition \ref{p:main}. In particular the energy profile depends on $a$, but this will not be a problem since we will bound $\|e\|_{\eta^*}$ independently of $a$. See also Remark \ref{REM} for a more thorough explanation.\ We start with \eqref{e:v_q_0}. Using \eqref{mollest2} and the rescaling, we get
$$
\|u_\delta\|_0\leq \|\tilde u\|_0 \leq \Gamma \|u\|_0
$$
It is clear that we can find a sufficiently large $a$ such that
\[
\Gamma \|u\|_0 \le 1 - \delta_1^{1/2}.
\]
Therefore, \eqref{e:v_q_0} is fulfilled. Let us now show \eqref{e:R_q_inductive_est} and \eqref{e:v_q_inductive_est}. First, by \eqref{mollest4}, we have
$$
\| \mathring R_\delta \|_0\leq \|\tilde u\|^2_{\theta'}\delta^{2\theta '} = \| u\|^2_{\theta'} \Gamma^2\delta^{2\theta'}.
$$
Thus, if $\delta$ is chosen so that 
\begin{equation}
\label{delta_restr_1}
\delta\leq \left(\frac{\delta_1 \lambda_0^{-4\alpha}}{\Gamma^2}\right)^\frac{1}{2\theta'},
\end{equation}
then \eqref{e:R_q_inductive_est} holds for $q = 0$ if $a$ is large enough. Moreover, through \eqref{mollest2}, we estimate
$$
\|u_\delta\|_1\lesssim  \|\tilde u\|_{\theta'}\delta^{\theta ' -1} = \|u\|_{\theta'}\Gamma \delta^{\theta ' -1}.
$$
Thus, \eqref{e:v_q_inductive_est} holds for $q=0$ as soon as 
\begin{equation}
\label{delta_restr_2}
\delta\geq \left( \frac{\Gamma}{\delta_0^{\sfrac{1}{2}} \lambda_0^{1-\alpha}}\right)^\frac{1}{1-\theta'},
\end{equation}
and $a$ is sufficiently large.  To be able to pick an admissible parameter $\delta$, we need to check the compatibility of \eqref{delta_restr_1} and \eqref{delta_restr_2}. The two are compatible if and only if
\begin{equation}\label{compatib_delta}
\left(\Gamma \delta_0^{-\sfrac{1}{2}} \lambda_0^{-1+\alpha} \right)^{\frac{1}{1-\theta'}} \leq \left( \delta_1 \lambda_0^{-4\alpha}\Gamma^{-2}\right)^\frac{1}{2\theta'}.
\end{equation}
Plugging in the definition of $\Gamma$,  the above inequality holds if 
\begin{equation}\label{ugly_restriction}
\left(  b(\theta+\alpha-\beta) + \beta-1+\alpha\right)(\theta')^*<-2\beta b - 4\alpha + 2b(\beta-\theta-\alpha).
\end{equation}
Since the parameter $\alpha$ can be chosen sufficiently small depending on all the others, it is enough to check that there exists $b>1$ such that \eqref{ugly_restriction} holds for $\alpha=0$.  This is equivalent to 
$$
\frac{(1-\beta)(\theta')^*}{2\theta - (\beta-\theta)(\theta')^*}>1.
$$
Using that $(1-\theta')(\theta')^*=2\theta'$, and since $\beta-\theta>\beta-\theta'$,  the above quantity is bounded from below by 
$$
\frac{(1-\beta)(\theta')^*}{2\theta - (\beta-\theta)(\theta')^*}=\frac{2\theta' - (\beta-\theta')(\theta')^*}{2\theta - (\beta-\theta)(\theta')^*}>\frac{\theta'}{\theta}>1.
$$
This concludes the proof of the compatibility of \eqref{delta_restr_1} and \eqref{delta_restr_2}.

We are left with the estimate on the energy \eqref{e:energy_inductive_assumption}. By using \eqref{mollest4}, we estimate
\begin{align*}
\tilde e(t)-\int_{\T^3} |u_\delta|^2 \, dx&=\int_{\T^3} |\tilde u|^2 \, dx +\frac{\delta_1}{2}f(\Gamma t)-\int_{\T^3} |u_\delta|^2 \, dx=\int_{\T^3}\left( \left(|\tilde u|^2\right)_\delta-|u_\delta|^2 \right) \, dx+\frac{\delta_1}{2}f(\Gamma t)\\
&\leq \|u\|^2_{\theta'} \Gamma^2 \delta^{2\theta '} +\frac{\delta_1}{2}\leq \|u\|^2_{\theta'}\delta_1\lambda_0^{-4\alpha}+\frac{\delta_1}{2},
\end{align*}
where to obtain the last inequality we have used \eqref{delta_restr_1}. If $a$ is large enough,
\[
 \|u\|^2_{\theta'}\delta_1\lambda_0^{-4\alpha}+\frac{\delta_1}{2}\leq \delta_1,
\]
hence the upper bound of \eqref{e:energy_inductive_assumption} holds. Similarly we have
\begin{align*}
\int_{\T^3}\left( \left(|\tilde u|^2\right)_\delta-|u_\delta|^2 \right) \, dx+\frac{\delta_1}{2}f(\Gamma t) \geq-  \|u\|^2_{\theta'} \Gamma^2 \delta^{2\theta '} +\frac{\delta_1}{4} \ge - \|u\|^2_{\theta'}\delta_1\lambda_0^{-4\alpha}+\frac{\delta_1}{4}\geq \delta_1\lambda_0^{-\alpha},
\end{align*}
where, to guarantee the last inequality, we took again the parameter $a$ large enough. Now we observe that, since $\Gamma, \delta_1 \le 1$ for any choice of the parameters,
\[
\|\tilde e\|_{\eta^*} \leq \Gamma^{2+\eta^*} \|e_u\|_{\eta^*} +\frac{\delta_1}{2} \|f\|_{\eta^*}\leq  \|e_u\|_{\eta^*} + \|f\|_{\eta^*}.
\]
Hence, independently of $a$, there exists a constant $E > 0$ such that
\[
\|\tilde e\|_{\eta^*}  \le E, \quad  \forall a \in (0,+\infty).
\]
Therefore, we are in place to apply Proposition \ref{p:main} to get a solution $\tilde v\in C^{\theta ''}(\T^3\times[0,T/\Gamma])$ of \eqref{E}, for  $\theta<\theta '' <\beta$.
Moreover
\begin{equation}\label{rescen}
e_{\tilde v}(t) = \int_{\T^3} |\tilde v|^2\,dx =\tilde e(t)
\end{equation}
and, as already done in \eqref{stimalimite}, we have the estimate
\begin{equation}\label{sigma}
\|\tilde v-u_\delta\|_{\theta }\lesssim \sum_{q\geq 1} \lambda_q^{\theta -\beta}\leq C \lambda_{1}^{\theta-\beta},
\end{equation}
provided $a$ is chosen sufficiently large, with a constant $C>0$ independent of $a$.  Thus,  by triangular inequality 
\begin{equation}\label{piccolo2}
\|\tilde v-\tilde u\|_{\theta }\leq \|\tilde v-u_\delta\|_\theta+\|u_\delta-\tilde u\|_\theta\leq C \lambda_1^{\theta-\beta} + \|u\|_{\theta'}\Gamma \delta^{\theta'-\theta}
\end{equation}
We now rescale $\tilde v$ back to $[0,T]$, i.e. we set $v(x,t):=\Gamma^{-1} \tilde v(x,\Gamma^{-1}t)$. Undo the rescaling also on $\tilde u$, getting back the original $u$ from \eqref{palla2}.  By \eqref{piccolo2},  \eqref{delta_restr_1} and the choice of $\Gamma$ we get
$$
\|v-u\|_\theta=\Gamma^{-1} \|\tilde v - \tilde u\|_\theta \leq C\Gamma^{-1}\lambda_1^{\theta-\beta} +\| u\|_{\theta'} \delta^{\theta'-\theta}\leq C\left(\lambda_1^{-\alpha} + \lambda_0^{-2\alpha\frac{\theta'-\theta}{\theta'}} \right).
$$
Then, Proposition \ref{stimaintempo} implies 
\begin{equation}\label{last_est_baire_close}
\|v-u\|_{C^\theta_{x,t}}\leq  C\left(\lambda_1^{-\alpha} + \lambda_0^{-2\alpha\frac{\theta'-\theta}{\theta'}} \right).
\end{equation}
Thus, if $a$ is sufficiently large, we can guarantee the validity of \eqref{vicinanza}. Moreover, \eqref{EN} follows from \eqref{rescen}, concluding the proof of the claim and of the theorem.

\begin{remark}\label{REM}
Since the choice in the previous proof of the energy profile depends on $a$, we wish to clarify in this remark the dependences of the parameters appearing in the proof of the claim. First, we fixed parameters $0 < \beta < \theta' < 1/3$, and we chose $b>1$ sufficiently close to $1$ in such a way that  \eqref{compatib_delta} and $$b < \sqrt{\frac{\theta'^*}{\beta^*}}$$ hold at the same time.  Note that \eqref{compatib_delta} is important in order to be able to find an admissible mollification parameter $\delta$. By choosing $\alpha \in (0,\alpha_1)$, where $\alpha_1$ is small enough, this can be guaranteed. Note that in this way $\alpha_1$ only depends on $\beta,\theta'$ and $b$, as stated in Proposition \ref{p:main}. Therefore, we can always consider $\alpha_1 \le \alpha_0$, where $\alpha_0$ is the number appearing in Proposition \ref{p:main}. Next, we have proved that there exists $a_1$ large enough such that for $a \geq a_1$, we can guarantee estimates \eqref{e:R_q_inductive_est}, \eqref{e:v_q_inductive_est}, \eqref{e:v_q_0} and \eqref{e:energy_inductive_assumption} for $q = 0$, for any function $e$ of the form \eqref{form}. This $a_1$ only depends on $\beta,b,\alpha,\theta'$ and $u$. Moreover, in the last steps it is required to take $a$ large enough so that the right hand side in \eqref{last_est_baire_close} is less than $\frac{\eps}{3}$. This yields therefore a number $a_2 \ge a_1$ that depends on $\varepsilon$, $E\doteq \|e_u\|_{\eta^*} + \|f\|_{\eta^*}$ and the universal constant $C$ of Proposition \ref{stimaintempo}. Therefore $a_2$ now depends only on $\beta,b,\alpha,\theta'$ and $E$, since $u,\varepsilon$ and $C$ are fixed from the start of the proof of the claim. We can therefore take any $a_2 \ge a_0$, where $a_0$ is the parameter appearing in Proposition \ref{p:main}. Hence we take $\alpha\doteq \frac{\alpha_2}{2}$, $a\doteq 2a_2$. These choices define uniquely $e$ as in \eqref{form} and let us prove the claim.
\end{remark}

We end this section with the proof of Theorem \ref{t_smooth}. Since it follows closely the one of Theorem \ref{t_baire}, we avoid to give all the technical details already given in the previous proof
\subsection{Proof of Theorem \ref{t_smooth}}
We want to prove that $\overline S$ has empty interior, which is equivalent to say that for any $u_0\in \overline S$ and for any $\varepsilon>0$, there exists a $v\in C_\theta$ such that 
\begin{align}
v &\not \in \overline S\label{noncista}, \\
d(u_0,v)&<\varepsilon \label{moltovicino}.
\end{align}
The closure and the distance appearing in the lines above are all referred to the $C^\theta$ topology. Since $u_0\in \overline S$, there exists a smooth solution  $u $ of \eqref{E} such that 
\begin{equation}\label{vicinamezzi}
d(u_0, u)<\frac{\varepsilon}{2}.
\end{equation}
In particular $u$ is a smooth subsolution whose associated Reynolds stress is zero, and by applying the same rescaling \eqref{recaling} with $\Gamma=\min\{(2\|u\|_0)^{-1},1\}$ we can guarantee that the rescaled solution $\tilde u$ satisfies \eqref{e:v_q_inductive_est} and \eqref{e:v_q_0} by choosing the parameter $a$ large enough. Since $\tilde u$ a smooth solution, its kinetic energy is constant, denoted by $E_{\tilde u}$. Moreover, by choosing a non constant and smooth function $1/2\leq f\leq 1 $, with the choice 
$$
e(t)=E_{\tilde u}+\frac{\delta_1}{2}f(t)
$$
also condition \eqref{e:energy_inductive_assumption} is satified. As in the proof of  Theorem \ref{t_baire}, we can now apply Proposition \ref{p:main} in order to get a solution $\tilde v\in C^\theta(\T^3\times [0,T/\Gamma])$, such that $e_{\tilde v}\equiv e$. Moreover, by choosing the parameter $a$ large enough, we can also ensure that $\tilde v$ is $C^\theta$ close to $\tilde u$. By rescaling these maps back, we thus get a solution $v\in C_\theta$ with a non constant energy profile, such that 
\begin{equation}\label{vicinamezzi2}
d(v,u)<\frac{\varepsilon}{2}.
\end{equation}
From \eqref{vicinamezzi} and \eqref{vicinamezzi2} we obviously deduce \eqref{moltovicino}. Moreover, the fact that the kinetic energy $e_v$ is not constant implies that $v$ cannot be obtained as a uniform limit of smooth solutions, showing also \eqref{noncista}.

\section{Proof of Proposition \ref{p:main}}
The proof of the main iterative proposition given in \cite{BDSV} is subdivided in three steps
\begin{itemize}
\item[1.] mollification: $ (v_q,\mathring R_q)\mapsto (v_\ell,\mathring R_\ell)$;
\item[2.]  gluing : $ (v_\ell,\mathring R_\ell) \mapsto (\overline v_q,\mathring{\overline{ R}}_q)$;
\item[3.]  perturbation: $(\overline v_q,\mathring{\overline{ R}}_q)\mapsto (v_{q+1},\mathring R_{q+1}) $.
\end{itemize}
In the proof of \cite[Proposition 2.1]{BDSV}, the energy function $e$ only appears in the perturbation step and both the mollification and the gluing steps are independent on its choice. Thus, also in our case, given the triple  $(v_q,p_q,\mathring R_q)$ there will exists a new triple $(\overline{v}_q, \overline p_q,\mathring{\overline{ R}}_q)$ solving the Euler Reynolds system such that the temporal support of $ \mathring{\overline{ R}}_q$ is contained in pairwise disjoint intervals $I_i$ of length comparable to
$$
\tau_q=\frac{\ell^{2\alpha}}{\delta_q^{\sfrac12}\lambda_q}.
$$
More precisely, for any $n\in \mathbb{Z}$ let 
$$
t_n=\tau_q n, \qquad I_n=\left[t_n+\frac13 \tau_q,t_n+\frac23 \tau_q\right]\cap[0,T], \qquad  J_n=\left[t_n-\frac13 \tau_q,t_n+\frac13 \tau_q\right]\cap[0,T].
$$
We have 
$$
\supp \mathring{\overline R}_q\subset \bigcup_{n\in \mathbb{Z}} I_n \times \T^3.
$$
Moreover  the following estimates hold
\begin{align}
\|v_q-\overline v_q\|_0&\lesssim\delta_{q+1}^{\sfrac12}\lambda_q^{-\alpha} \label{gluing_est_v}\\
\|\overline{v}_q\|_{1+N}&\lesssim \delta_{q}^{\sfrac12}\lambda_q\ell^{-N}\label{gluing_est_v_der}\\
\left\| \mathring{\overline R}_q\right\|_{N+\alpha}&\lesssim \delta_{q+1}\ell^{-N+\alpha}\label{gluing_est_R}\\
\left\| \partial_t \mathring{\overline R}_q+ (\overline{v}_q\cdot \nabla)\mathring{\overline R}_q\right\|_{N+\alpha}&\lesssim \delta_{q+1}\delta_q^{\sfrac12}\lambda_q \ell^{-N-\alpha}\label{gluing_est_adv_R}\\
\left|\int_{\T^3}|\overline v_q|^2 - |v_\ell|^2\,dx\right|&\lesssim \delta_{q+1}\ell^\alpha\label{gluing_est_en_prel},
\end{align}
for any $N\geq 0$, where the small parameter $\ell$ is defined as
$$
\ell=\frac{\delta_{q+1}^{\sfrac12}}{\delta_q^{\sfrac12}\lambda_q^{1+\sfrac{3\alpha}{2}}}
$$
and it comes from the mollification step. We observe that by choosing $\alpha$ sufficiently small and $a$ sufficiently large we can assume
\begin{equation}\label{rel_ell_lambda}
\lambda_q^{-\sfrac32}\leq \ell\leq \lambda_q^{-1}.
\end{equation}
We also state another inequality we will need in the following, that is a consequence of \eqref{mollest4},\eqref{e:energy_inductive_assumption}, and \eqref{gluing_est_en_prel} :
\begin{equation}\label{gluing_est_en}
\frac{\delta_{q+1}}{2\lambda_q^\alpha}\leq e(t)-\int_{\T^3}|\overline v_q|^2\,dx\leq 2\delta_{q+1}.
\end{equation}
\\
Thus we can pass to the perturbation step. The aim is to find a triple $( v_{q+1},p_{q+1},\mathring R_q)$ which solves \eqref{NSR} with the estimates
\begin{align}
\|v_{q+1}-\overline v_q\|_0+\lambda_{q+1}^{-1}\|v_{q+1}-\overline v_q\|_1&\leq\frac{M}{2}\delta_{q+1}^{\sfrac12} \label{pert_est_v}\\
\left\| \mathring{ R}_{q+1}\right\|_{\alpha}&\lesssim \frac{\delta^{\sfrac12}_{q+1}\delta_q^{\sfrac12}\lambda_q}{\lambda_{q+1}^{1-4\alpha}}\label{pert_est_R}\\
\Bigg| e(t)-\int_{\T^3}|v_{q+1}|^2\,dx -\frac{\delta_{q+2}}{2}\Bigg| &\leq C\frac{\delta_q^{\sfrac12}\delta^{\sfrac12}_{q+1}\lambda_q^{1+2\alpha}}{\lambda_{q+1}}+\frac{\delta_{q+2}}{4}\label{pert_est_en}.
\end{align}
Note that estimates \eqref{pert_est_v} and \eqref{pert_est_R} are the same stated in \cite{BDSV}, while \eqref{pert_est_en} is slightly different due to the term $\delta_{q+2}/4$. This does not affect the iteration and Proposition \ref{p:main} is still a direct consequence of estimates \eqref{pert_est_v}-\eqref{pert_est_en}. However, since estimate \eqref{pert_est_en} is different than the one used in \cite{BDSV}, we give a complete proof of Proposition \ref{p:main}.
\subsection{Proof of Proposition \ref{p:main}}
By using \eqref{gluing_est_v} and \eqref{pert_est_v} we estimate
$$
\|v_{q+1}-v_q\|_0\leq \|v_{q+1}-\overline v_q\|_0+\|\overline v_{q}-v_q\|_0\leq \frac{M}{2}\delta_{q+1}^{\sfrac12}+C \delta_{q+1}^{\sfrac12} \lambda_q^{-\alpha},
$$
where the constant $C$ depends only on $\alpha, \beta$ and $M$. Thus if $a$ is chosen sufficiently large we can guarantee 
\begin{equation}\label{est_v_0}
\|v_{q+1}-v_q\|_0\leq M\delta_{q+1}^{\sfrac12}.
\end{equation}
Similarly, by using \eqref{e:v_q_inductive_est}, \eqref{gluing_est_v_der} and \eqref{pert_est_v}, we have
\begin{align*}
\|v_{q+1}-v_q\|_1\leq \|v_{q+1}-\overline v_q\|_1+\|\overline v_{q}\|_1+\|v_q\|_1\leq \frac{M}{2}\delta_{q+1}^{\sfrac12}\lambda_{q+1}+\left(C+M\right)\delta_q^{\sfrac12}\lambda_q.
\end{align*}
Again, if $a$ is chosen sufficiently large, we can ensure 
\begin{equation*}
\|v_{q+1}-v_q\|_1\leq M\delta_{q+1}^{\sfrac12}\lambda_{q+1},
\end{equation*}
which, together with \eqref{est_v_0}, gives \eqref{e:v_diff_prop_est}. 
By \eqref{e:v_q_inductive_est}, \eqref{e:v_q_0} and \eqref{e:v_diff_prop_est} we get 
\begin{align*}
\|v_{q+1}\|_0&\leq \|v_{q+1}-v_q\|_0+\|v_q\|_0\leq \frac{M}{2}\delta_{q+1}^{\sfrac12}+1-\delta_q^{\sfrac12}\leq 1-\delta_{q+1}^{\sfrac12},\\
\|v_{q+1}\|_1&\leq \|v_{q+1}-v_q\|_1+\|v_q\|_1\leq  \frac{M}{2}\delta_{q+1}^{\sfrac12}\lambda_{q+1}+ M\delta_q^{\sfrac12}\lambda_q\leq M \delta_{q+1}^{\sfrac12}\lambda_{q+1}
\end{align*}
where we also chose the parameter $a$ sufficiently large to guarantee the last inequalities of the previous estimates. In particular this shows that $v_{q+1}$ obeys \eqref{e:v_q_inductive_est} and \eqref{e:v_q_0} in which $q$ is replaced by $q+1$.
Estimate \eqref{e:R_q_inductive_est} for $\mathring{R}_{q+1}$ is a direct consequence of \eqref{pert_est_R}  and the parameters inequality 
\begin{equation}\label{paramet_ineq}
 \frac{\delta^{\sfrac12}_{q+1}\delta_q^{\sfrac12}\lambda_q}{\lambda_{q+1}}\leq \frac{\delta_{q+2}}{\lambda_{q+1}^{8\alpha}}.
\end{equation}
Indeed, by taking the logarithms, the last inequality holds by choosing $a$ sufficiently large if 
$$
-\beta-\beta b+1-b +2b^2\beta+8b\alpha<0,
$$
but this is true since $b < \frac{1-\beta}{2\beta}$ (see \eqref{b_comecamillo}) and $\alpha$ is chosen sufficiently small. We are only left with estimate \eqref{e:energy_inductive_assumption}  for $v_{q+1}$. By \eqref{pert_est_en} and \eqref{paramet_ineq} we have
$$
e(t)-\int_{\T^3}|v_{q+1}|^2\,dx \leq \frac{\delta_{q+2}}{2}+C\frac{\delta_q^{\sfrac12}\delta^{\sfrac12}_{q+1}\lambda_q^{1+2\alpha}}{\lambda_{q+1}}+\frac{\delta_{q+2}}{4}\leq \frac{3}{4}\delta_{q+2}+C \frac{\delta_{q+2}}{\lambda_{q+1}^{6\alpha}},
$$
thus, for a sufficiently large $a$, we get
\begin{equation}\label{est_energia}
e(t)-\int_{\T^3}|v_{q+1}|^2\,dx \leq \delta_{q+2}.
\end{equation}
Finally, again by \eqref{pert_est_en} we have
$$
e(t)-\int_{\T^3}|v_{q+1}|^2\,dx \geq  \frac{\delta_{q+2}}{2}-C\frac{\delta_q^{\sfrac12}\delta^{\sfrac12}_{q+1}\lambda_q^{1+2\alpha}}{\lambda_{q+1}}-\frac{\delta_{q+2}}{4}\geq \left(\frac{1}{4} -\frac{C}{\lambda_{q+1}^{6\alpha}}\right) \delta_{q+2},
$$
and, since  for a sufficiently large $a$ we can ensure that 
$$
\frac{1}{4}-\frac{C}{\lambda_{q+1}^{6\alpha}}\geq \frac{1}{\lambda_{q+1}^{\alpha}},
$$
we end up with
$$
e(t)-\int_{\T^3}|v_{q+1}|^2\,dx \geq\delta_{q+2}\lambda_{q+1}^{-\alpha},
$$
which together with \eqref{est_energia} gives \eqref{e:energy_inductive_assumption} and concludes the proof of the proposition.
\section{Perturbation}

We will now outline the construction of the perturbation $w_{q+1}$, where 
\[
v_{q+1}:= w_{q+1} + \overline v_q \, .
\] 
The perturbation $w_{q+1}$ is highly oscillatory and will be based on the Mikado flows introduced in \cite{DS}. We recall the construction in the following lemma

\begin{lemma}\label{l:Mikado}For any compact subset $\mathcal N\subset\subset \mathcal{S}^{3\times3}_+$
there exists a smooth vector field 
$$
W:\mathcal N\times \T^3 \to \R^3, 
$$
such that, for every $R\in\mathcal N$ 
\begin{equation}\label{e:Mikado}
\left\{\begin{aligned}
\diver_\xi(W(R,\xi)\otimes W(R,\xi))&=0 \\ \\
\diver_\xi W(R,\xi)&=0,
\end{aligned}\right.
\end{equation}
and
\begin{eqnarray}
	\fint_{\T^3} W(R,\xi)\,d\xi&=&0,\label{e:MikadoW}\\
    \fint_{\T^3} W(R,\xi)\otimes W(R,\xi)\,d\xi&=&R.\label{e:MikadoWW}
\end{eqnarray}
\end{lemma}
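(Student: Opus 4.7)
The plan is to construct $W$ as a finite linear combination of \emph{single-direction Mikado flows}, one for each of finitely many rational wave vectors $k_i \in \Z^3\setminus\{0\}$, with coefficients that depend smoothly on $R$. The key geometric input I would invoke is the classical decomposition lemma (see e.g.\ \cite{DS} or the Geometric Lemma in \cite{BDSV}): for any compact set $\mathcal N \subset\subset \mathcal{S}^{3\times 3}_+$, one can find finitely many $k_1,\dots,k_N \in \Z^3 \setminus \{0\}$ and smooth nonnegative functions $\gamma_i$, defined on a neighbourhood of $\mathcal N$, such that
\[
R=\sum_{i=1}^N \gamma_i(R)^2\, k_i\otimes k_i, \qquad \forall\, R\in \mathcal N.
\]
This reduces the task to producing, for each fixed direction $k_i$, a scalar profile $\phi_i:\T^3\to\R$ and then setting $W(R,\xi):=\sum_i \gamma_i(R)\,\phi_i(\xi)\, k_i$.

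I would require the profiles $\phi_i$ to satisfy (a)~$k_i\cdot \nabla \phi_i\equiv 0$, i.e.\ $\phi_i$ is constant along the direction $k_i$; (b)~$\fint_{\T^3}\phi_i\, d\xi = 0$ and $\fint_{\T^3}\phi_i^2\, d\xi = 1$; and (c)~the supports of $\phi_1,\dots,\phi_N$ are pairwise disjoint in $\T^3$. Granted (a)--(c), the four conclusions of the lemma become a direct verification. Divergence-freeness is $\diver_\xi(\phi_i k_i) = k_i\cdot \nabla \phi_i = 0$, and the zero-mean condition follows from (b). By (c), at every $\xi$ at most one summand of $W(R,\xi)$ is nonzero, so
\[
W(R,\xi)\otimes W(R,\xi)=\sum_i \gamma_i(R)^2\, \phi_i(\xi)^2\, k_i\otimes k_i
\]
with no cross-terms; averaging and using (b) together with the geometric decomposition yields $\fint W\otimes W\, d\xi = R$. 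Finally, because $\phi_i^2$ is, like $\phi_i$, constant along $k_i$, one has $\diver_\xi(\phi_i^2\, k_i\otimes k_i)=(k_i\cdot \nabla \phi_i^2)\, k_i = 0$ for each $i$, and hence $\diver_\xi(W\otimes W)=0$.

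The only nontrivial step, and therefore the main obstacle, is arranging (c) in the presence of (a). A function obeying (a) is necessarily concentrated in a tubular neighbourhood of the closed geodesic $\Gamma_i:=\{tk_i:t\in\R\}/\Z^3\subset \T^3$, and for different $k_i$ these are a finite collection of closed curves in the $3$-manifold $\T^3$. Because two $1$-dimensional submanifolds in a $3$-manifold are generically disjoint, a generic transverse translation $\Gamma_i\mapsto \Gamma_i+y_i$ --- which preserves property (a) --- makes the $\Gamma_i$'s pairwise disjoint, and then sufficiently thin tubes around them are also pairwise disjoint. The profile $\phi_i$ itself would be built by choosing a smooth, compactly supported, mean-zero bump on a disk cross-section $D\subset k_i^\perp$ of the tube (for instance the Laplacian of a standard bump, which gives both compact support and zero mean), rescaling to satisfy the $L^2$-normalization in (b), and extending translation-invariantly along $k_i$. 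Smoothness of $W$ in the parameter $R$ is inherited directly from the smoothness of the $\gamma_i$ produced by the geometric decomposition, completing the construction.
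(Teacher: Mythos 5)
The paper does not actually prove this lemma: it is stated as a recollection of the Mikado flow construction, with the proof deferred to the literature (the citation in the text is \cite{DS}, though Mikado flows in this form go back to Daneri and Sz\'ekelyhidi, and the lemma is also quoted verbatim in \cite{BDSV}). Your argument is a correct and essentially complete reconstruction of that standard construction: the smooth geometric decomposition $R=\sum_i \gamma_i(R)^2\, k_i\otimes k_i$ over a compact subset of $\mathcal{S}^{3\times3}_+$, the tube profiles $\phi_i$ constant along the axis direction $k_i$ and supported near the closed rational geodesic $\Gamma_i\subset\T^3$, and the transversality/genericity argument that a finite family of $1$-dimensional closed geodesics in the $3$-torus can be made pairwise disjoint by small translations, so that sufficiently thin tubes around them are disjoint as well. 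You also correctly isolate the one point where disjointness is indispensable: it is needed both to kill the cross terms in $W\otimes W$ (yielding \eqref{e:MikadoWW}) and, pointwise, to reduce $\diver_\xi(W\otimes W)$ to $\sum_i\gamma_i^2 (k_i\cdot\nabla\phi_i^2)\,k_i=0$. The remaining verifications (divergence-free and zero-mean from $k_i\cdot\nabla\phi_i=0$ and $\fint\phi_i=0$, smoothness in $R$ from smoothness of the $\gamma_i$) are routine and correct. The only cosmetic gloss I would add is that the ``disk cross-section'' should be read as a small disk inside the quotient $2$-torus $\T^3/\Gamma_i$, so that the translation-invariant extension is automatically $\Z^3$-periodic; this is implicit in what you wrote and does not affect correctness.
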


Using the fact that $W(R,\xi)$ is $\T^3$-periodic and has zero mean in $\xi$, we write
\begin{equation}\label{e:Mikado_Fourier}
W(R,\xi)=\sum_{k \in \Z^3\setminus\{0\}}  a_k(R) e^{ik\cdot \xi}
\end{equation}
for some  smooth functions $R\to a_k(R) \in \C^3$, satisfying $a_k (R) \cdot k=0$. From the smoothness of $W$, we further infer
\begin{equation}\label{e:a_k_est}
\sup_{R\in \mathcal N}|D^N_R a_k(R)|\leq  \frac{C(\mathcal{N},N,m)}{|k|^m}
\end{equation}
for some constant $C$, which depends, as highlighted in the statement, on $\mathcal{N}$, $N$ and $m$.
\begin{remark}\label{r:choice_of_M}
Later in the proof the estimates \eqref{e:a_k_est} will be used with a specific choice of the compact set $\mathcal{N}$ and of the integers $N$ and $m$: this specific choice will then determine the universal constant $M$ appearing in Proposition \ref{p:main}.
\end{remark}

Using the Fourier representation we see that from \eqref{e:MikadoWW}
\begin{equation}\label{e:Mikado_stationarity}
W(R,\xi)\otimes W(R,\xi) = R+\sum_{k\neq 0} C_{k}(R) e^{i k\cdot \xi}
\end{equation}
where
\begin{equation}\label{e:Ck_ind}
C_k  k=0 \quad \mbox{and} \quad
\sup_{R\in \mathcal N}|D^N_R C_k(R)|\leq \frac{C (\mathcal{N}, N, m)}{|k|^m}
\end{equation}
for any $m,N \in \N$. It will also be useful to write the Mikado flows in terms of a potential. We note
\begin{align}
\curl_{\xi}\left(\left(\frac{ik\times  a_k}{|k|^2}\right) e^{i k\cdot \xi}\right) &= -i\left(\frac{ik\times  a_k}{|k|^2}\right)\times k  e^{i k\cdot \xi} 
= -\frac{k\times (k\times  a_k)}{|k|^2}  e^{i k\cdot \xi} =  a_k  e^{i k\cdot \xi} \label{e:Mikado_Potential}
\end{align}

We define the smooth non-negative cut-off functions $\eta_i=\eta_i(x,t)$ with the following properties
\begin{enumerate}
\item[(i)] $\eta_i\in C^{\infty}(\T^3\times [0,T])$ with $0\leq \eta_i(x,t)\leq 1$ for all $(x,t)$;
\item[(ii)] $\supp \eta_i\cap\supp\eta_j=\emptyset$ for $i\neq j$;
\item[(iii)] $\T^3\times I_i\subset \{(x,t):\eta_i(x,t)=1\}$;
\item[(iv)] $\supp \eta_i\subset \T^3\times I_i\cup J_i\cup J_{i+1}$;
\item[(v)] There exists a positive geometric constant $c_0>0$ such that for any $t\in[0,T]$
\begin{equation}\label{c0_constant}
\sum_i\int_{\T^3}\eta_i^2(x,t)\,dx\geq c_0.
\end{equation}
\end{enumerate}

The next lemma is taken from \cite{BDSV}.
\begin{lemma}\label{l:cutoffs}
There exists cut-off functions $\{\eta_i\}_i$ with the properties (i)-(v) above and such that for any $i$ and $n,m\geq 0$
\begin{align*}
\|\partial_t^n\eta_i\|_{m}\leq C (n,m) \tau_q^{-n}
\end{align*}
where $C(n,m)$ are geometric constants depending only upon $m$ and $n$.
\end{lemma}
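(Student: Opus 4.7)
The plan is to define $\eta_i$ as a separated product $\eta_i(x,t) = \psi_i(x)\,\phi\!\bigl((t-t_i)/\tau_q\bigr)$, where $\phi$ is a single smooth time bump rescaled by $\tau_q$ and the spatial factors $\psi_i$ alternate between two disjoint regions of $\T^3$ according to the parity of $i$. This ansatz makes the derivative bound $\|\partial_t^n \eta_i\|_m \le C(n,m)\tau_q^{-n}$ an immediate consequence of the chain rule, and isolates all of the work into two independent design problems: one in time (the bump $\phi$), one in space (the family $\psi_i$).

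First I would pick $\phi \in C^\infty(\R;[0,1])$ with $\phi \equiv 1$ on $[1/3, 2/3]$ and $\supp \phi \subset [-1/3, 4/3]$; after rescaling this yields $\supp\eta_i \subset \T^3 \times [t_i - \tau_q/3,\, t_i + 4\tau_q/3] = \T^3\times(I_i \cup J_i \cup J_{i+1})$ with $\eta_i \equiv 1$ on $\T^3 \times I_i$, so conditions (i), (iii), (iv) follow at once. In addition I would arrange that $\phi^2(s) + \phi^2(s-1) \ge c_1 > 0$ on the overlap range $s \in [2/3, 4/3]$. For the spatial factor I would fix two disjoint open sets $A, B \subset \T^3$ together with smooth nonnegative bumps $\psi_A, \psi_B$ supported in them and sharing a common positive $L^2$ mass $c$, and set $\psi_i = \psi_A$ for $i$ even, $\psi_i = \psi_B$ for $i$ odd. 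Disjointness (ii) then splits into two cases: if $|i-j|\ge 2$ the time supports are already separated by a gap of length $\tau_q/3$, while if $|i-j|=1$ the nontrivial time overlap $J_{i+1}$ is annihilated in space by the alternating choice of $\psi_i$. Property (v) reduces to
\[
\sum_i \int_{\T^3}\eta_i^2(x,t)\,dx \;=\; c \sum_i \phi^2\!\left(\tfrac{t-t_i}{\tau_q}\right) \;\ge\; c\,\min(1,c_1) \;=:\; c_0,
\]
because at any $t$ either exactly one translate of $\phi$ equals $1$, or two consecutive translates lie in the overlap regime and their squares sum to at least $c_1$.

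The derivative bound is then immediate from the explicit product structure: $\partial_t^n \eta_i(x,t) = \tau_q^{-n}\psi_i(x)\phi^{(n)}\!\bigl((t-t_i)/\tau_q\bigr)$, hence $\|\partial_t^n \eta_i\|_m \le \|\psi_i\|_m\,\|\phi^{(n)}\|_\infty\,\tau_q^{-n}$, which is the claimed bound. The one slightly delicate step is designing $\phi$ so that the flat plateau on $[1/3,2/3]$ coexists with the pointwise bound $\phi^2(s)+\phi^2(s-1)\ge c_1$ on $[2/3,4/3]$; this can be achieved by taking the rising and falling edges of $\phi$ to be reflections of one another smoothly glued to the plateau, with the common transition profile chosen so that its square and the square of its shifted mirror add up to a positive lower bound. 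I expect this elementary but fiddly construction of $\phi$ to be the main (and essentially only) technical content of the proof.
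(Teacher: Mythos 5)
Your product ansatz $\eta_i(x,t) = \psi_i(x)\,\phi((t-t_i)/\tau_q)$, with the $\psi_i$ alternating between bumps supported in two disjoint sets $A,B\subset\T^3$, cannot satisfy property (iii). For $t\in I_i$ you arrange $\phi\equiv 1$, so $\eta_i(x,t) = \psi_i(x)$, which vanishes off the small set $A$ or $B$; the claim that ``$\eta_i\equiv 1$ on $\T^3\times I_i$'' therefore fails unless $\psi_i\equiv 1$ on $\T^3$, which contradicts the bump structure you need for the disjointness argument. And (iii) is not cosmetic: it is precisely what guarantees $\sum_i\eta_i^2 = 1$ on $\supp\mathring{\overline R}_q\subset\bigcup_n\T^3\times I_n$, so that the localized tensors $R_{q,i}$ reproduce the gluing error where it actually lives. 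Without (iii) the perturbation step does not cancel $\mathring{\overline R}_q$.

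The obstruction is structural rather than a fixable detail: no separated product can satisfy (ii), (iii) and (v) simultaneously. If $\eta_i(x,t) = \psi_i(x)\phi_i(t)$ with $0\le\psi_i,\phi_i\le 1$, then (iii) forces $\psi_i\equiv 1$ on $\T^3$ and $\phi_i\equiv 1$ on $I_i$; then (ii) forces the time supports $\supp\phi_i$ to be pairwise disjoint closed subsets of $[0,T]$, and since a nondegenerate interval is connected it cannot be covered by two or more pairwise disjoint nonempty closed sets; hence $\sum_i\phi_i^2$ must vanish at some time, violating (v). The genuine construction has to let the spatial support of $\eta_i(\cdot,t)$ \emph{shrink with $t$}: $\eta_i$ is identically $1$ on $\T^3\times I_i$ and, as $t$ moves into $J_{i+1}$, its spatial support deforms smoothly into a proper subset of $\T^3$ (alternating with parity) and collapses before $I_{i+1}$ begins, while the support of $\eta_{i+1}$ grows from the complementary region; this is not a product in $(x,t)$. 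Your treatment of (i), (ii), (iv), (v) and the derivative bound would all be correct if (iii) held, and the design of $\phi$ you flag as the ``only technical content'' is indeed elementary --- but (iii) is the real technical content, and the proposal does not address it.
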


Analogously to \cite{BDSV}, we will now define the perturbations that are necessary to show \eqref{pert_est_v}-\eqref{pert_est_en}. Since the energy profile is not smooth, we will need to mollify it. To do so we will henceforth consider $e$ to be extended on the whole $\R$ as $e(t)=e(0)$ for all $t<0$ and $e(t)=e(T)$ for all $t>T$, in such a way that the extension is still in $C^{\eta^*}(\R)$. With this convention we define
$$
e_q(t):=(e *\psi_{\varepsilon_q})(t),
$$
where $\psi_{\varepsilon_q}$ is a standard mollifier and
\begin{equation}\label{eps_q}
\varepsilon_q:=\left(\frac{\delta_{q+2}}{4E}\right)^\frac{1}{\eta^*}.
\end{equation}
Define also
\begin{equation*}
\rho_{q}(t):= \frac{1}{3} \left(e_q(t)-\frac{\delta_{q+2}}{2}-\int_{\T^3} |\overline v_q|^2\,dx\right)
\end{equation*}
and
\begin{equation*}
\rho_{q,i}(x,t):= \frac{\eta_i^2(x,t)}{\sum_j \int_{\T^3} \eta_j^2(y,t)\,dy}\rho_{q}(t)
\end{equation*}

Define the backward flows $\Phi_i$ for the velocity field $\overline v_{q}$ as the solution of the transport equation
\begin{equation*}
\left\{ 
\begin{aligned}
&(\partial_t + \overline v_q  \cdot \nabla) \Phi_i =0 \\ \\
&\Phi_i\left(x,t_i\right) = x.
\end{aligned}
\right.
\end{equation*}
Define
\begin{equation*}
R_{q,i}:=\rho_{q,i} \Id- \eta_i^2\mathring{\overline R_q}
\end{equation*}
and
\begin{equation}\label{e:tildeR_def}
\tilde R_{q,i} =  \frac{\nabla\Phi_iR_{q,i}(\nabla\Phi_i)^T}{ \rho_{q,i}} \,.
\end{equation}
We note that, because of properties (ii)-(iv) of $\eta_i$, 
\begin{itemize}
\item $\supp R_{q,i}\subset \supp\eta_i$;
\item on $\supp\mathring{\bar{R}}_q$ we have $\sum_i\eta_i^2=1$;
\item $\supp \tilde R_{q,i}\subset \T^3\times I_i\cup J_i\cup J_{i+1}$;
\item $\supp \tilde R_{q,i}\cap \supp \tilde R_{q,j}=\emptyset\textrm{ for all }i\neq j$.
\end{itemize}

\begin{lemma}
\label{l:R_in_range}
For $a\gg 1$ sufficiently large we have
\begin{equation}\label{e:Phi-close-to-id}
\|\nabla \Phi_i - \Id\|_0 \leq \frac{1}{2} \qquad \mbox{for $t\in \supp (\eta_i)$.}
\end{equation}
Furthermore, for any $N\geq 0$ 
\begin{align}
\frac{\delta_{q+1}}{8\lambda_q^{\alpha}} \leq |\rho_{q}(t)| &\leq \delta_{q+1}\quad\textrm{ for all $t$}\,,
\label{e:rho_range}\\
\|\rho_{q,i}\|_0 &\leq \frac{\delta_{q+1}}{c_0}\,,\label{e:rho_i_bnd}\\
 \|\rho_{q,i}\|_N&\lesssim \delta_{q+1}\,,\label{e:rho_i_bnd_N}\\
 \|\partial_t \rho_q\|_0 &\lesssim \delta_{q+1} \delta_q^{\sfrac{1}{2}} \lambda_q
 \label{e:rho_t}\,,\\
 \|\partial_t \rho_{q,i}\|_N &\lesssim \delta_{q+1}\tau_q^{-1}\,.
 \label{e:rho_i_bnd_t}
\end{align}
Moreover, for all $(x,t)$
$$
\tilde R_{q,i}(x,t)\in B_{\sfrac12}(\Id)\subset \mathcal{S}^{3\times 3}_+\,,
$$
where $B_{\sfrac12}(\Id)$ denotes the metric ball of radius $1/2$ around the identity $\Id$ in the space $\mathcal{S}^{3\times 3}$. 
\end{lemma}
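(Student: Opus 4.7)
The plan is to verify the seven listed estimates in turn. The first, third, fourth, and sixth are relatively standard consequences of the gluing bounds \eqref{gluing_est_v_der}--\eqref{gluing_est_en_prel} and the cutoff bounds of Lemma \ref{l:cutoffs}; the real content lies in the two-sided bound on $\rho_q$ and its time derivative, where the non-smoothness of $e$ must be traded against the choice of mollification scale \eqref{eps_q}. I start with \eqref{e:Phi-close-to-id}: differentiating the transport equation in space gives an ODE for $\nabla\Phi_i-\Id$ driven by $-\nabla\overline v_q \,\nabla\Phi_i$, which by Gr\"onwall together with \eqref{gluing_est_v_der} yields
$$\|\nabla\Phi_i - \Id\|_0 \lesssim \|\overline v_q\|_1 \tau_q \lesssim \delta_q^{\sfrac12}\lambda_q\cdot \frac{\ell^{2\alpha}}{\delta_q^{\sfrac12}\lambda_q}=\ell^{2\alpha}$$
on $\supp\eta_i$, which is arbitrarily small if $a$ is large. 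The estimate \eqref{e:rho_i_bnd} is immediate from the definition of $\rho_{q,i}$ together with property (v) of the cutoffs, while \eqref{e:rho_i_bnd_N} and \eqref{e:rho_i_bnd_t} follow by the Leibniz rule using Lemma \ref{l:cutoffs}, the dominant contribution in \eqref{e:rho_i_bnd_t} coming from the time derivative of $\eta_i^2$ which produces the factor $\tau_q^{-1}$.

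For \eqref{e:rho_range}, I split
$$3\rho_q(t) = \Bigl(e(t)-\int_{\T^3}|\overline v_q|^2\,dx\Bigr)+\bigl(e_q(t)-e(t)\bigr)-\frac{\delta_{q+2}}{2},$$
control the first summand by \eqref{gluing_est_en}, and use the standard mollification inequality together with the precise choice \eqref{eps_q} to get $|e_q(t) - e(t)| \leq E\varepsilon_q^{\eta^*} = \delta_{q+2}/4$. The conclusion then follows provided $\delta_{q+2} \lambda_q^\alpha \ll \delta_{q+1}$, which is guaranteed by \eqref{e:b_beta_rel} for $\alpha$ small and $a$ large. For \eqref{e:rho_t}, I test the first equation of \eqref{NSR} against $\overline v_q$ to obtain
$$\partial_t\int_{\T^3}|\overline v_q|^2\,dx = -2\int_{\T^3}\nabla\overline v_q:\mathring{\overline R}_q\,dx,$$
which is bounded in modulus by $\lesssim \delta_q^{\sfrac12}\lambda_q \cdot \delta_{q+1}$ by \eqref{gluing_est_v_der} and \eqref{gluing_est_R}. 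Exploiting $\int \psi' = 0$ to insert $-e(t)$ inside the mollification, the H\"older bound on $e$ yields $|\partial_t e_q(t)| \lesssim E\varepsilon_q^{\eta^* - 1}$, and one finally checks that this is also controlled by $\delta_{q+1}\delta_q^{\sfrac12}\lambda_q$: after substitution of \eqref{eps_q} and the definitions of $\delta_q,\lambda_q$, this reduces to an algebraic inequality among $\beta,b,\eta^*$ and $\alpha$ which is implied exactly by the new relation \eqref{e:b_beta_rel} (for $\alpha$ sufficiently small).

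It remains to establish the range condition on $\tilde R_{q,i}$. On $\supp\eta_i$, using $\rho_{q,i} = \eta_i^2\rho_q/\sum_j \int\eta_j^2$, a direct computation gives
$$\frac{R_{q,i}}{\rho_{q,i}} = \Id - \frac{\sum_j\int_{\T^3}\eta_j^2(y,t)\,dy}{\rho_q(t)}\mathring{\overline R}_q.$$
Since only boundedly many $\eta_j$ are supported near any fixed time, the prefactor is $O(1)$; combining the $C^0$ bound $\|\mathring{\overline R}_q\|_0\lesssim \delta_{q+1}\ell^\alpha$ from \eqref{gluing_est_R} with the lower bound on $\rho_q$ from \eqref{e:rho_range} and the fact that $\ell \leq \lambda_q^{-1-c}$ for some $c = c(b,\beta) > 0$ (which follows by direct inspection of the definition of $\ell$), one obtains $\|R_{q,i}/\rho_{q,i} - \Id\|_0 \lesssim \lambda_q^{-c\alpha}\to 0$. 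Together with the sharper form $\|\nabla\Phi_i - \Id\|_0 \lesssim \ell^{2\alpha}$ from the first step, this shows that $\tilde R_{q,i}$ is a small perturbation of $\Id$, hence lies in $B_{\sfrac12}(\Id)\subset \mathcal{S}^{3\times 3}_+$. The main obstacle is the analysis of $\partial_t e_q$ in \eqref{e:rho_t}: it is precisely here, in contrast to \cite{BDSV} where the smoothness of $e$ makes the corresponding bound automatic, that the strengthened relation $b < \sqrt{\eta^*/\beta^*}$ plays a decisive role.
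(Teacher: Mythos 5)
Your argument is correct and follows essentially the same route as the paper: the same decomposition of $3\rho_q$ into the gluing‐estimate term, the mollification error $e_q-e$ controlled via the scale $\varepsilon_q$ from \eqref{eps_q}, and the $\delta_{q+2}$ correction; the same treatment of $\partial_t\rho_q$ via $\partial_t\int|\overline v_q|^2 = -2\int\nabla\overline v_q:\mathring{\overline R}_q$ plus $|\partial_t e_q|\lesssim E\varepsilon_q^{\eta^*-1}$, and the same observation that the resulting parameter inequality is exactly where the strengthened relation \eqref{e:b_beta_rel} is needed (this is inequality \eqref{relazionestrana} in the paper). Where you differ is only that you spell out the Gr\"onwall bound for $\nabla\Phi_i-\Id$ and the ratio $R_{q,i}/\rho_{q,i}=\Id-(\sum_j\int\eta_j^2/\rho_q)\mathring{\overline R}_q$ explicitly, while the paper simply cites \cite[Lemma 5.4]{BDSV} for these two facts; your computations there are correct (the precise exponent on $\ell$ that you invoke is a little off as stated, since the margin $\ell\lambda_q\le\lambda_q^{-3\alpha/2}(\delta_{q+1}/\delta_q)^{1/2}$ depends on $\alpha$ rather than only on $b,\beta$, but the conclusion $\|R_{q,i}/\rho_{q,i}-\Id\|_0\to 0$ as $a\to\infty$ still holds).
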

\begin{proof}
We write
$$
\rho_q(t)=\frac{1}{3}\left(e_q(t)-\int_{\T^3}|\overline v_q|^2\,dx -\frac{\delta_{q+2}}{2}\right)=\frac{1}{3}\left(e_q(t)-e(t)+e(t)-\int_{\T^3}|\overline v_q|^2\,dx -\frac{\delta_{q+2}}{2} \right),
$$
thus by \eqref{gluing_est_en} we get
\begin{equation}\label{est_rho_q_prima}
\frac{1}{3}\left( \frac{\delta_{q+1}}{2\lambda_q^\alpha}-\frac{\delta_{q+2}}{2}-|e_q(t)-e(t)|\right)\leq |	\rho_q(t)|\leq \frac{1}{3}\left(|e_q(t)-e(t)|+2\delta_{q+1}+\frac{\delta_{q+2}}{2} \right).
\end{equation}
By using \eqref{mollest2} and the fact that $[e]_{\eta^*} \le E$, we also get
$$
|e_q(t)-e(t)|\leq [e]_{\eta^*}\varepsilon_q^{\eta^*}\leq \delta_{q+2}
$$
and, by plugging it into \eqref{est_rho_q_prima}, we achieve
$$
 \frac{\delta_{q+1}}{6\lambda_q^\alpha}-\frac{ \delta_{q+2}}{2} \leq|	\rho_q(t)|\leq \frac23 \delta_{q+1}+\frac{\delta_{q+2}}{2}.
$$
It is easy to show that by choosing $a$ sufficiently large we can guarantee \eqref{e:rho_range}. Note that by definition of the cut-off function $\eta_i$ 
\begin{equation}\label{bound_cutoff}
c_0\leq \sum_{i} \int_{\T^3} \eta_i^2(x,t)\,dx \leq 2
\end{equation}
and hence we obtain \eqref{e:rho_i_bnd}. Since $|\nabla^N \eta_j|\lesssim 1 $, the bound \eqref{e:rho_i_bnd_N} also follows.
For the bound \eqref{e:Phi-close-to-id} and the fact that $ \tilde R_{q,i}(x,t)\in B_{\sfrac12}(\Id)\subset \mathcal{S}^{3\times 3}_+ $ we refer to \cite[Lemma 5.4]{BDSV}.
To prove \eqref{e:rho_t}, we first use \eqref{gluing_est_v_der}, \eqref{gluing_est_R} to estimate
$$
\left| \frac{d}{dt}\int_{\T^3} |\overline v_q|^2\,dx \right| =2\left|\int_{\T^3} \nabla \overline v_q \cdot \mathring{\overline R}_q\,dx \right| \lesssim \delta_{q+1}\delta_q^{\sfrac12}\lambda_q.
$$
Moreover, by \eqref{mollest3}, we have 
$$
|\partial_te_q|\leq [e]_{\eta^*} \varepsilon_q^{\eta^* - 1} \leq C \delta_{q+2}^{1-\sfrac{1}{\eta^*}},
$$
where the constant $C$ depends on $\eta$ and $E$. Thus \eqref{e:rho_t} is implied by the following parameters inequality 
\begin{equation}\label{relazionestrana}
C \delta_{q+2}^{1-\sfrac{1}{\eta^*}}\leq  \delta_{q+1}\delta_q^{\sfrac12}\lambda_q.
\end{equation}
Using the definition of the parameters $\delta_q$ and $\lambda_q$ it can be checked that the last inequality holds if one chose $a$ big enough (depending on $b, \beta, \eta$ and $E$) provided that 
$$
\left( \frac{1}{\eta^*}-1\right) b^2+b-\frac{1}{\beta^*}<0.
$$
Since $b$ satisfies \eqref{e:b_beta_rel} we have
$$
\left( \frac{1}{\eta^*}-1\right) b^2+b-\frac{1}{\beta^*}<\left( \frac{1}{\eta^*}-1\right)\frac{\eta^*}{\beta^*}+\frac{\eta^*}{\beta^*}-\frac{1}{\beta^*}=0,
$$
thus \eqref{relazionestrana} holds. Finally, since $\|\partial_t \eta_j \|_N \lesssim \tau_q^{-1}$ and $\tau_q^{-1}\geq \delta_q^{\sfrac12}\lambda_q$, using \eqref{bound_cutoff}, also the estimate \eqref{e:rho_i_bnd_t} follows.
\end{proof}

\subsection{The constant \texorpdfstring{$M$}{M}}
The principal term of the perturbation can be written as
\begin{equation}
w_{o}:=\sum_i  \left(\rho_{q,i}(x,t)\right)^{\sfrac12} (\nabla\Phi_i)^{-1} W(\tilde R_{q,i}, \lambda_{q+1}\Phi_i) = \sum_i w_{o,i}\, ,
\label{e:w0_decomp}
\end{equation}
where Lemma \ref{l:Mikado} is applied with $\mathcal{N} = \overline{B}_{\sfrac12} (\Id)$, namely the closed ball (in the space of symmetric $3\times 3$ matrices) of radius $\sfrac{1}{2}$ centered at the identity matrix.

From Lemma \ref{l:R_in_range} it follows that $W(\tilde R_{q,i}, \lambda_{q+1}\Phi_i)$ is well defined. Using the Fourier series representation of the Mikado flows \eqref{e:Mikado_Fourier} we can write 
\begin{equation*}
w_{o,i}=\sum_{k \neq 0} (\nabla\Phi_i)^{-1} b_{i,k} e^{i\lambda_{q+1}k\cdot \Phi_i}\, ,
\end{equation*}
where 
\begin{equation*}
b_{i,k}(x,t):= \left(\rho_{q,i}(x,t)\right)^{\sfrac12} a_k(\tilde R_{q,i}(x,t)).
\end{equation*}
By the definition of $w_{o,i}$ and \eqref{e:MikadoWW} we compute
\begin{align}\label{ugly_ww}
w_{o,i}\otimes w_{o,i}&=\rho_{q,i} \nabla\Phi_i^{-1}(W\otimes W)(\tilde R_{q,i},\lambda_{q+1}\Phi_i)\nabla \Phi^{-T}_i\nonumber \\
&=\rho_{q,i} \nabla\Phi_i^{-1}\tilde R_{q,i}\nabla \Phi^{-T}_i+\sum_{k\neq 0}\rho_{q,i} \nabla\Phi_i^{-1}C_k(\tilde R_{q,i})\nabla \Phi^{-T}_ie^{i\lambda_{q+1}k\cdot \Phi_i}\nonumber \\
&=R_{q,i}+\sum_{k\neq 0}\rho_{q,i} \nabla\Phi_i^{-1}C_k(\tilde R_{q,i})\nabla \Phi^{-T}_ie^{i\lambda_{q+1}k\cdot \Phi_i}.
\end{align}
The following is a crucial point of the construction, which ensures that the constant $M$ of Proposition \ref{p:main}
is geometric and in particular independent of all the parameters of the construction.

\begin{lemma}\label{l:choice_of_M}
There is a geometric constant $\bar M$ such that
\begin{equation}\label{e:barM}
\|b_{i,k}\|_0 \leq \frac{\bar M}{|k|^{ 4}} \delta_{q+1}^{\sfrac{1}{2}}\, .
\end{equation}
\end{lemma}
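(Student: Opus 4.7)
The plan is to estimate $\|b_{i,k}\|_0$ by factoring the product $b_{i,k}(x,t) = (\rho_{q,i}(x,t))^{1/2} a_k(\tilde R_{q,i}(x,t))$ and bounding each factor separately, taking care that every constant introduced is geometric, i.e.\ independent of the parameters $a,b,\beta,\alpha,\eta,E$ and of $q$.

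First I would handle the scalar factor $\rho_{q,i}^{1/2}$. By estimate \eqref{e:rho_i_bnd} of Lemma \ref{l:R_in_range},
\[
\|\rho_{q,i}\|_0^{1/2} \le c_0^{-1/2}\,\delta_{q+1}^{1/2},
\]
where $c_0$ is the geometric constant from \eqref{c0_constant}. This accounts for the $\delta_{q+1}^{1/2}$ on the right-hand side of \eqref{e:barM} and contributes only a geometric prefactor.

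Next I would bound $a_k(\tilde R_{q,i}(x,t))$ pointwise using \eqref{e:a_k_est}. The crucial observation, which is exactly the reason why the compact set $\mathcal{N}$ was chosen to be $\overline{B}_{1/2}(\Id)$ in the definition \eqref{e:w0_decomp} of $w_o$, is that by the last assertion of Lemma \ref{l:R_in_range} the matrix $\tilde R_{q,i}(x,t)$ lies in $\overline{B}_{1/2}(\Id)$ for every $(x,t)$, independently of any parameter of the construction. Applying \eqref{e:a_k_est} with $N = 0$ and $m = 4$ on this fixed compact set therefore yields
\[
|a_k(\tilde R_{q,i}(x,t))| \le \sup_{R \in \overline{B}_{1/2}(\Id)} |a_k(R)| \le \frac{C(\overline{B}_{1/2}(\Id),0,4)}{|k|^4},
\]
and the constant on the right is purely geometric, as flagged in Remark \ref{r:choice_of_M}.

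Combining the two bounds gives
\[
\|b_{i,k}\|_0 \le c_0^{-1/2}\,C(\overline{B}_{1/2}(\Id),0,4)\,\frac{\delta_{q+1}^{1/2}}{|k|^4},
\]
so the claim follows by setting $\bar M := c_0^{-1/2}\,C(\overline{B}_{1/2}(\Id),0,4)$. The main (conceptual) point is not the calculation itself but the decoupling: one must verify that the target set of $\tilde R_{q,i}$ is a fixed compact set chosen before the parameters $a,\alpha,b,\beta,\eta,E$ enter the scheme, which is what allows the constant from \eqref{e:a_k_est} to be absorbed into a single geometric $\bar M$; once that is clear, the pointwise estimates are routine.
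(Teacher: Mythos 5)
Your proof is correct and follows exactly the intended argument: factor $b_{i,k}$ into $\rho_{q,i}^{1/2}$ and $a_k(\tilde R_{q,i})$, bound the first via \eqref{e:rho_i_bnd} (contributing $c_0^{-1/2}\delta_{q+1}^{1/2}$ with $c_0$ geometric), and bound the second via \eqref{e:a_k_est} with $N=0$, $m=4$ on the fixed compact set $\overline B_{1/2}(\Id)$, whose validity rests on the final assertion of Lemma \ref{l:R_in_range}. The paper leaves this lemma without an explicit proof precisely because it is this two-line computation, and your identification of the key point --- that the compact set $\mathcal N = \overline B_{1/2}(\Id)$ is fixed \emph{before} any of $a,b,\alpha,\beta,\eta,E$ are chosen, which is what makes $\bar M$ geometric --- is exactly the content of Remark \ref{r:choice_of_M}.
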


We are finally ready to define the constant $M$ of Proposition \ref{p:main}: from Lemma \ref{l:choice_of_M} it follows trivially that the constant is indeed geometric and hence independent of all the parameters of the statement of Proposition \ref{p:main}.

We can now define the geometric constant $M$ as 
\begin{equation}\label{d:choice_of_M}
M = 64 \bar M \sum_{k\in \Z^3\setminus \{0\}} \frac{1}{|k|^4}\, ,
\end{equation}
where $\bar M$ is the constant of Lemma \ref{l:choice_of_M}.

We also define
\begin{align*}
w_{c}&:=\frac{-i}{\lambda_{q+1}}\sum_{i,k\neq 0} \left[ \curl \left(\left(\rho_{q,i}\right)^{\sfrac12} \frac{\nabla\Phi_i^T(k\times a_{k}(\tilde R_{q,i}))}{|k|^2}\right)\right] e^{i\lambda_{q+1}k\cdot \Phi_i}
=: \sum_{i,k\neq 0} c_{i,k} e^{i\lambda_{q+1}k\cdot \Phi_i}\, .
\end{align*}
Then by direct computations one can check that
\begin{align}\label{e:w_curl}
w_{q+1} = w_o+w_c=\frac{-1}{\lambda_{q+1}}\curl\left(\sum_{i,k\neq 0} (\nabla\Phi_i)^T\left(\frac{ik\times b_{k,i}}{|k|^2}\right)e^{i\lambda_{q+1}k\cdot \Phi_i}\right)\,,
\end{align}
thus the perturbation $w_{q+1}$ is divergence free. 

\subsection{The final Reynolds stress and conclusions}  
In order to define the new Reynolds tensor, we recall the operator $\mathcal R$ from \cite{BDSV}, which
can be thought of as an inverse divergence operator for symmetric tracefree 2-tensors. The operator is defined as
\begin{equation}
\label{e:R:def}
\begin{split}
({\mathcal R} f)^{ij} &= {\mathcal R}^{ijk} f^k \\
{\mathcal R}^{ijk} &= - \frac 12 \Delta^{-2} \partial_i \partial_j \partial_k - \frac 12 \Delta^{-1} \partial_k \delta_{ij} +  \Delta^{-1} \partial_i \delta_{jk} +  \Delta^{-1} \partial_j \delta_{ik}.
\end{split}
\end{equation}
when acting on vectors $f$ with zero mean on $\T^3$, and has the property that $\mathcal R f$ is symmetric and $\diver ( {\mathcal R}  f) = f$.  Upon letting
\begin{align*}
\overline R_q = \sum_{i} R_{q,i}\, ,
\end{align*}
we define the new Reynolds stress as follows
\begin{align}\label{new_reynolds}
\mathring{R}_{q+1} :=  \RR \left( w_{q+1} \cdot \nabla \overline v_q+\partial_t  w_{q+1} + \overline v_q \cdot \nabla w_{q+1}+\diver \left(- {\overline R}_{q} + w_{q+1} \otimes w_{q+1} \right)\right) 
\end{align}

With this definition one may verify that 
\begin{equation*}
\left\{
\begin{array}{l}
 \partial_t v_{q+1} + \diver (v_{q+1} \otimes v_{q+1}) + \nabla p_{q+1} = \diver(\mathring{R}_{q+1}) \, ,
\\ \\
 \diver v_{q+1} = 0 \, ,
\end{array}\right.
\end{equation*}
where the new pressure is defined by
\begin{equation}\label{e:new_pressure}
p_{q+1}(x,t) = \bar p_q(x,t)  - \sum_{i} \rho_{q,i}(x,t)  + \rho_{q}(t).
\end{equation}

The following proposition is taken from  \cite{BDSV}.
\begin{prop}
\label{p:perturbation}
For $t\in I_i\cup J_i \cup J_{i+1}$ and any $N\geq 0$
\begin{align}
\| (\nabla\Phi_i)^{-1}\|_N + \|\nabla\Phi_i\|_N &\lesssim \ell^{-N} \,,\label{e:phi_N}\\
\|\tilde R_{q,i}\|_N &\lesssim  \ell^{-N}\,,\label{e:tR_est}\\
\|b_{i,k}\|_N &\lesssim \delta_{q+1}^{\sfrac12}|k|^{-6}\ell^{-N}\,, \label{e:b_k_est_N}\\
\|c_{i,k}\|_N &\lesssim  \delta_{q+1}^{\sfrac12}\lambda_{q+1}^{-1}|k|^{-6}\ell^{-N-1}\,.\label{e:c_k_est}
\end{align}
Moreover assuming $a$ is sufficiently large, the perturbations $w_o$, $w_c$ and $w_q$ satisfy the following estimates
\begin{align}
\|w_o\|_0 +\frac{1}{\lambda_{q+1}}\|w_o\|_1 &\leq \frac{M}{4}\delta_{q+1}^{\sfrac 12}\label{e:w_o_est}\\
\|w_c\|_0+\frac{1}{\lambda_{q+1}} \|w_c\|_1 &\lesssim \delta_{q+1}^{\sfrac 12}\ell^{-1}\lambda_{q+1}^{-1}\label{e:w_c_est}\\
\|w_{q+1}\|_0 +\frac{1}{\lambda_{q+1}}\|w_{q+1}\|_1 &\leq  \frac{M}{2} \delta_{q+1}^{\sfrac 12}\label{e:w_est}
\end{align}
where the constant $M$ depends solely on the constant $c_0$ in \eqref{c0_constant}.
In particular, we obtain \eqref{pert_est_v}.
\end{prop}


We are now ready to complete the proof of Proposition \ref{p:main} by proving the remaining estimates \eqref{pert_est_en} and \eqref{pert_est_R}.
We start with the energy increment
 
\begin{prop}\label{p:energy}
 The energy of $v_{q+1}$ satisfies the following estimate
\begin{equation*}
    \Bigg|e(t)-\int_{\T^3}|v_{q+1}|^2\,dx-\frac{\delta_{q+2}}2 \Bigg|\leq C \frac{\delta_q^{\sfrac12}\delta_{q+1}^{\sfrac12}\lambda_q^{1+2\alpha}}{\lambda_{q+1}}+\frac{\delta_{q+2}}{4}\,.
\end{equation*}
In particular, \eqref{pert_est_en} holds.
\end{prop}
\begin{proof}
By definition we have $v_{q+1}=\overline v_q+w_{q+1}=\overline v_q+w_o+w_c$, thus we have
\begin{align}\label{primodacamillo}
\left|e(t)-\int_{\T^3} |v_{q+1}|^2\,dx-\frac{\delta_{q+2}}{2}  \right| &\leq \left|e(t)-\int_{\T^3} |w_{o}|^2\,dx-\frac{\delta_{q+2}}{2} -\int_{\T^3}|\overline{v}_q|^2\,dx \right|\nonumber \\
&+\left| \int_{\T^3}|w_c|^2\,dx+2\int_{\T^3} w_o\cdot w_c\,dx+2\int_{\T^3}w_{q+1}\cdot \overline v_q\,dx\right|.
\end{align}
The estimate on the second term in the right hand side of \eqref{primodacamillo} is just a a consequence of \eqref{gluing_est_v_der} and Proposition \ref{p:perturbation} and for a complete we refer to \cite[Proposition 6.2]{BDSV}, in which it is proved that
$$
\left| \int_{\T^3}|w_c|^2\,dx+2\int_{\T^3} w_o\cdot w_c\,dx+2\int_{\T^3}w_{q+1}\cdot \overline v_q\,dx\right|\lesssim \frac{\delta_q^{\sfrac12}\delta_{q+1}^{\sfrac12}\lambda_q^{1+2\alpha}}{\lambda_{q+1}}.
$$
Now recall that from \eqref{ugly_ww} and the definition of $R_{q,i}$ we have
\begin{align*}
\int_{\T^3}|w_o|^2\,dx&=\sum_i\int_{\T^3} \tr R_{q,i}\,dx+\int_{\T^3}\sum_{i,k\neq 0} \rho_{q,i} \nabla\Phi_i^{-1}\tr C_k(\tilde R_{q,i})\nabla \Phi^{-T}_ie^{i\lambda_{q+1}k\cdot \Phi_i}\,dx\\
&=3\sum_{i}\int_{\T^3}\rho_{q,i}\,dx +\int_{\T^3}\sum_{i,k\neq 0} \rho_{q,i} \nabla\Phi_i^{-1}\tr C_k(\tilde R_{q,i})\nabla \Phi^{-T}_ie^{i\lambda_{q+1}k\cdot \Phi_i}\,dx\\
&=3\rho_q(t)+\int_{\T^3}\sum_{i,k\neq 0} \rho_{q,i} \nabla\Phi_i^{-1}\tr C_k(\tilde R_{q,i})\nabla \Phi^{-T}_ie^{i\lambda_{q+1}k\cdot \Phi_i}\,dx\\
&=e_q(t)-\frac{\delta_{q+2}}{2}-\int_{\T^3}|\overline v_q|^2\,dx+\int_{\T^3}\sum_{i,k\neq 0} \rho_{q,i} \nabla\Phi_i^{-1}\tr C_k(\tilde R_{q,i})\nabla \Phi^{-T}_ie^{i\lambda_{q+1}k\cdot \Phi_i}\,dx.
\end{align*}
As a consequence of \eqref{e:Ck_ind}, Lemma \ref{l:R_in_range} and Proposition \ref{p:perturbation} we have
$$
\left|\int_{\T^3}\sum_{i,k\neq 0} \rho_{q,i} \nabla\Phi_i^{-1}\tr C_k(\tilde R_{q,i})\nabla \Phi^{-T}_ie^{i\lambda_{q+1}k\cdot \Phi_i}\,dx\right|\lesssim  \frac{\delta_q^{\sfrac12}\delta_{q+1}^{\sfrac12}\lambda_q^{1+2\alpha}}{\lambda_{q+1}}.
$$
For a detailed proof of the previous estimate we again refer to \cite[Proposition 6.2]{BDSV}. Thus we are only left with estimating $|e(t)-e_q(t)| $, but from \eqref{mollest2}, the definition of $\varepsilon_q$ in \eqref{eps_q} and the fact that $[e]_{C^{\eta^*}} \le E$, we get
$$
|e(t)-e_q(t)|\leq [e]_{\eta^*}\varepsilon_q^{\eta^*} \le \frac{\delta_{q+2}}{4},
$$
which concludes the proof of the proposition. 
\end{proof}

For the inductive estimate on $\mathring R_{q+1}$ we refer to  \cite[Proposition 6.1]{BDSV}
\begin{prop}
\label{p:R_q+1}
The Reynolds stress error $\mathring R_{q+1}$ defined in \eqref{new_reynolds} satisfies the estimate
\begin{equation}\label{e:final_R_est}
\|\mathring R_{q+1}\|_{0}\lesssim \frac{\delta_{q+1}^{\sfrac12}\delta_q^{\sfrac{1}{2}} \lambda_q}{\lambda_{q+1}^{1-4\alpha}} \,.
\end{equation}
In particular, \eqref{pert_est_R} holds.
\end{prop}

\section{Final Comments}\label{comm}

In this section, we wish to comment on why we need to introduce the space $X_{\theta}$ (see \eqref{Xtheta}), since clearly the most natural choice for $X_{\theta}$ would have simply been the space of all $C^\theta(\T^3\times [0,T])$ or $c^\theta(\T^3\times [0,T])$ solutions of Euler equation. Here $c^\theta$ denotes the space of little H\"older continuous functions, namely the closure of smooth functions in the $C^\theta$ norm.  We believe that such a discussion highlights some interesting features of the convex integration scheme.
\\
\\
The introduction of $X_{\theta}$ is related to the proof of Theorem \ref{t_baire} and to intrinsic properties of the iterative scheme of \cite{BDSV}. The proof of Theorem \ref{t_baire} uses the following strategy, that is quite standard in arguments involving Baire Theorem. As a first step, we rewrite $Y_\theta^c$ as union of closed sets $C_{m,n,r,s}$. The parameters $m,n,s$ quantify an improvement in the regularity of elements of $C_{m,n,r,s}$. Secondly, one needs to prove that $C_{m,n,r,s}$ has empty interior. Equivalently, every element $u_0 \in C_{m,n,r,s}$ must be approximated in the $C^\theta(\T^3\times[0,T])$ norm with elements $u \in X_{\theta}\setminus C_{m,n,r,s}$. This is where the convex integration scheme comes into play. The iterative procedure of \cite{BDSV} tells us, roughly speaking, that given a smooth subsolution $\bar{u}$ and a positive and smooth (or $C^{\theta^* + \gamma}([0,T])$, as proved in the present work) energy profile $e$, one can find an arbitrarily close solution $u$ such that $e = e_u$, provided some initial estimates are verified. In order to obtain the desired "less regular" approximating sequence, it seems therefore rather natural to try to apply this result to the subsolution obtained by mollifying $u_0$, and choose an energy profile $e \in C^{\theta^* + 1/2m}([0,T])\setminus W^{\theta^* + 1/2m}$.
\\
\\
Since one wishes to approximate a $C^{\theta}(\T^3\times [0,T])$ solution with a sequence of smooth functions in the $C^\theta(\T^3\times [0,T])$ topology, the first natural restriction is to take the complete metric space in which to apply the Baire argument to be a closed subset of $c^{\theta}(\T^3\times[0,T])$. Once one can guarantee the fact that the mollifications of $u_0$ are close in the right topology to $u_0$, the next step is to use the convex integration scheme on a close enough space-time mollification of $u_0$, let us call it $u_\delta$, $\delta > 0$ being the parameter of mollification. Let us moreover denote with $R_\delta$ the Reynold stress tensor of $u_\delta$, i.e.
\[
R_\delta = u_\delta\otimes u_\delta - (u_0\otimes u_0)_\delta.
\]
In order to apply the scheme, one needs to guarantee step $0$ of the inductive estimates, i.e. \eqref{e:R_q_inductive_est},\eqref{e:v_q_inductive_est}, \eqref{e:v_q_0}, \eqref{e:energy_inductive_assumption}. We will now show that, by choosing any $\theta < \beta$ in order to have the $C^\theta(\T^3\times [0,T])$ closeness of the resulting solution to $u_\delta$ (and therefore to $u_0$), \eqref{e:R_q_inductive_est} and \eqref{e:v_q_inductive_est} become impossible to guarantee using the estimates of Proposition \ref{p:moll}. Through these estimates, one wishes to find $\delta > 0$ and $\alpha >0$ for which
\[
\|\mathring R_\delta\|_{0}\lesssim \delta^{2\theta} \le  \delta_{1}\lambda_0^{-3\alpha} \text{ and } \|u_\delta\|_1\lesssim \delta^{\theta - 1} \le M \delta_0^{\sfrac12}\lambda_0.
\]
These relations are anyway incompatible for any $\delta, \alpha > 0$ if 
\begin{equation}\label{rel}
\delta_q = \lambda_q^{-2\beta} = a^{-2\beta b^q}
\end{equation}
for $a, b > 1$. To see this, notice that a solution $\delta$ would need to satisfy also 
\begin{equation}\label{first}
\delta^{2\theta} \lesssim \delta_1 = \lambda_1^{-2\beta}
\end{equation}
Moreover, the estimate on the $C^1$ norm can be rewritten as 
\begin{equation}\label{second}
\delta_0^{-\frac{1}{2(1-\theta)}}\lambda_0^{-\frac{1}{1-\theta}} \lesssim \delta.
\end{equation}
Combining  \eqref{rel}, \eqref{first} and \eqref{second}, one obtains
\[
a^{-\frac{1-\beta}{1-\theta}} \lesssim a^{-b\frac{\beta}{\theta}},
\]
hence that the function $a \mapsto a^{b\frac{\beta}{\theta} -\frac{1-\beta}{1-\theta}}$ is bounded. Since for every $b >1$, one has $b\frac{\beta}{\theta} -\frac{1-\beta}{1-\theta} > 0$ because of the inequality $\theta < \beta$, we find that $a$ can not be taken freely in an open unbounded interval $(a_0,+\infty)$, hence Proposition \ref{p:main} can not possibly be true in this setting. Nonetheless, as it is clearly stated in \cite{BDSV}, we could have found many $C^\beta(\T^3\times [0,T])$ solutions of \eqref{E} $C^\beta(\T^3\times [0,T])$ close to $u_\delta$, for $\beta < \theta$. This is obviously not sufficient for Theorem \ref{t_baire}. This feature of the "$\theta-\beta$ gap" was noticed also in the work \cite{Is17}, to which we refer the reader for interesting discussions. On the other hand, if the starting point $u_0$ can be approximated in the $C^\theta(\T^3\times [0,T])$ topology by more regular solutions, for instance in $C^{\theta'}(\T^3\times [0,T])$, $\theta < \theta'$, then by the previous discussion it becomes clear that we can now start the scheme from these more regular points obtaining the desired estimates in $C^\theta (\T^3\times [0,T])$. This is exactly the reason for introducing the space $X_\theta$.
\\
\\
We conclude this discussion by noting that, even though it could not contain all the $C^\theta(\T^3\times [0,T])$ solutions of \eqref{E}, $X_\theta$ contains many elements. Indeed, by \cite{BDSV}, for every smooth and positive energy profile $e$ and for every $\theta <\theta' <1/3$, we find a weak solution $u \in C^{\theta'}(\T^3\times [0,T])$ of \eqref{E} with $e = e_u$. Since $\theta' > \theta$, $u \in X_\theta$.

\begin{appendix}
\section{Time estimates of Euler equations}
Using the same technique introduced in \cite{CD} to prove the time regularity for H\"older solutions of Euler, we prove the following 
\begin{prop}\label{stimaintempo}
Let $u,v:\T^3\times [0,T]\rightarrow \R^3$ be two weak solutions of \eqref{E} such that $u,v\in C^0(([0,T];C^\theta(\T^3))$ for some $\theta\in (0,1)$. Then there exists a constant $C>0$, depending only on $\theta$, $\|u\|_\theta $ and $\|v\|_\theta$, such that 
$$
\|u-v\|_{C^\theta_{x,t}}\leq C\|u-v\|_\theta.
$$
\end{prop}
\begin{proof}
We define $w:=u-v$. We start by noticing that the  H\"older norm, in the space-time variables, decouples as follows
$$
\frac{|w(x,s)-w(y,t)|}{|(x,s)-(y,t)|^\theta}\leq \frac{|w(x,s)-w(y,s)|}{|x-y|^\theta}+\frac{|w(y,s)-w(y,t)|}{|t-s|^\theta}\leq \|w\|_\theta+\frac{|w(y,s)-w(y,t)|}{|t-s|^\theta}.
$$
Thus it is enough to show that there exists a constant $C>0$, independent of $y,t,s$, such that 
\begin{equation}\label{normasolotempo}
\frac{|w(y,s)-w(y,t)|}{|t-s|^\theta}\leq C\|w\|_\theta.
\end{equation}
If $p$ and $q$ are the corresponding pressures associated to $u$ and $v$ respectively, one has that $w$ solves
\begin{equation}\label{eulero_diff}
\partial_t w + \diver( w\otimes u + v\otimes w)+\nabla(p-q)=0.
\end{equation}
By taking the divergence of \eqref{eulero_diff}, we get
$$
-\Delta(p-q)=\diver \diver (w\otimes u+v\otimes w),
$$
from which, by Schauder estimates, we get 
\begin{equation}\label{stima_pressioni}
\|p-q\|_\theta\leq  \|w\|_\theta \left(\|u\|_\theta+\|v\|_\theta \right)\leq  C \|w\|_\theta.
\end{equation}
Let now $w_\delta=w*\varphi_\delta$ the space mollification of $w$, for some $\delta>0$ that will be fixed at the end of the proof. Since $w\in C^0([0,T];C^\theta(\T^3))$ we have
$$
|w(y,t)-w_\delta(y,t)|\leq C \|w\|_\theta \delta^\theta \qquad \forall t\in [0,T],
$$
from which, by adding and subtracting $w_\delta(y,s)$ and $w_\delta(y,t)$, we can estimate
\begin{equation}\label{holderspazio}
|w(y,s)-w(y,t)|\leq  C \|w\|_\theta \delta^\theta+|w_\delta(y,s)-w_\delta(y,t)|.
\end{equation}
Moreover, since $w$ solves \eqref{eulero_diff}, we get
\begin{equation}\label{w_delta_lip}
|w_\delta(y,s)-w_\delta(y,t)|\leq |t-s| \|\partial_t w_\delta\|_{C^0_{x,t}}\leq  |t-s|\big( \|(w\otimes u+v\otimes w)_\delta\|_1+ \|(p-q)_\delta\|_1\big).
\end{equation}
By estimate \eqref{stima_pressioni} and \eqref{mollest3}, we have
$$
\|(p-q)_\delta\|_1\leq C\|w\|_\theta \delta^{\theta-1},\quad  \forall \delta > 0,
$$
and also
$$
\| (w\otimes u+v\otimes w)_\delta  \|_1\leq C\delta^{\theta-1} \| w\otimes u+v\otimes w\|_\theta\leq C\|w\|_\theta \delta^{\theta-1},\quad  \forall \delta > 0.
$$
Thus, by plugging these two last inequalities in \eqref{w_delta_lip}, we get
$$
|w_\delta(y,s)-w_\delta(y,t)|\leq C |t-s|\delta^{\theta-1}\|w\|_\theta ,\; \forall \delta > 0,
$$
from which, by \eqref{holderspazio}, we conclude 
$$
|w(y,s)-w(y,t)|\leq C (\delta^\theta+|t-s|\delta^{\theta-1})\|w\|_\theta,\quad  \forall \delta > 0.
$$
By choosing $\delta=|t-s|$ we finally achieve \eqref{normasolotempo}, and this concludes the proof.
\end{proof}
\end{appendix}

\begin{bibdiv}
\begin{biblist}
\bib{BDSV}{article}{
   author={Buckmaster, Tristan},
   author={De Lellis, Camillo},
   author={Sz\'{e}kelyhidi, L\'{a}szl\'{o}, Jr.},
   author={Vicol, Vlad},
   title={Onsager's conjecture for admissible weak solutions},
   journal={Comm. Pure Appl. Math.},
   volume={72},
   date={2019},
   number={2},
   pages={229--274},
   issn={0010-3640},
   review={\MR{3896021}},
   doi={10.1002/cpa.21781},
}
\bib{CD}{book}{
   author={Colombo, Maria},
   author={De Rosa, Luigi},
   title={Regularity in time of H\"older solutions of Euler and hypodissipative Navier-Stokes equations},
   publisher={preprint: https://arxiv.org/abs/1811.12870},
   date={2018},
}

\bib{CET}{article}{
   author={Constantin, Peter},
   author={E, Weinan},
   author={Titi, Edriss S.},
   title={Onsager's conjecture on the energy conservation for solutions of
   Euler's equation},
   journal={Comm. Math. Phys.},
   volume={165},
   date={1994},
   number={1},
   pages={207--209},
   issn={0010-3616},
   review={\MR{1298949}},
}

\bib{DS}{article}{
   author={De Lellis, Camillo},
   author={Sz\'{e}kelyhidi, L\'{a}szl\'{o}, Jr.},
   title={Dissipative continuous Euler flows},
   journal={Invent. Math.},
   volume={193},
   date={2013},
   number={2},
   pages={377--407},
   issn={0020-9910},
   review={\MR{3090182}},
   doi={10.1007/s00222-012-0429-9},
}

\bib{HAR}{article}{
  author    = {G. H. Hardy},
  title     = {{Weierstrass{\textquotesingle}s Non-Differentiable Function}},
  journal   = {Transactions of the American Mathematical Society},
  year      = {1916},
  volume    = {17},
  number    = {3},
  pages     = {301},
  month     = {jul},
  doi       = {10.2307/1989005},
  publisher = {{JSTOR}},
}

\bib{Is}{article}{
   author={Isett, Philip},
   title={A proof of Onsager's conjecture},
   journal={Ann. of Math. (2)},
   volume={188},
   date={2018},
   number={3},
   pages={871--963},
   issn={0003-486X},
   review={\MR{3866888}},
   doi={10.4007/annals.2018.188.3.4},
}

\bib{Is17}{article}{
   author={Isett, Philip},
   title={On the Endpoint Regularity in Onsager's Conjecture},
   journal={arXiv:1706.01549 [math.AP]},
   date={2017},
}

\bib{Is15}{article}{
   author={Isett, Philip},
   title={Regularity in time along the coarse scale flow for the incompressible Euler equations},
   journal={arXiv:1307.0565 [math.AP]},
} 

\bib{IsSu}{article}{
   author={Isett, Philip},
   author={Oh, Sung-Jin},
   title={On the kinetic energy profile of H\"{o}lder continuous Euler flows},
   journal={Ann. Inst. H. Poincar\'{e} Anal. Non Lin\'{e}aire},
   volume={34},
   date={2017},
   number={3},
   pages={711--730},
   issn={0294-1449},
   review={\MR{3633742}},
   doi={10.1016/j.anihpc.2016.05.002},
}

\bib{IsSuCON}{article}{
  author={Isett, Philip},
   author={Oh, Sung-Jin},
  title     = {{On Nonperiodic Euler Flows with H\"older Regularity}},
  journal   = {Archive for Rational Mechanics and Analysis},
  year      = {2016},
  volume    = {221},
  number    = {2},
  pages     = {725--804},
  month     = {feb},
  doi       = {10.1007/s00205-016-0973-3},
  publisher = {Springer Science and Business Media {LLC}},
}

\end{biblist}
\end{bibdiv}

\end{document}